\documentclass[12pt]{article}
\oddsidemargin 0 mm
\topmargin -10 mm
\headheight 0 mm
\headsep 0 mm
\textheight 246.2 mm
\textwidth 159.2 mm
\footskip 9 mm
\setlength{\parindent}{0pt}
\setlength{\parskip}{5pt plus 2pt minus 1pt}
\pagestyle{plain}
\usepackage{amssymb}
\usepackage{amsthm}
\usepackage{amsmath}
\usepackage{graphicx}
\usepackage{enumerate}
\usepackage{pict2e}
\usepackage{framed}

\DeclareMathOperator{\Con}{Con}

\DeclareMathOperator{\Max}{Max}
\DeclareMathOperator{\Min}{Min}

\newtheorem{theorem}{Theorem}[section]
\newtheorem{definition}[theorem]{Definition}
\newtheorem{lemma}[theorem]{Lemma}
\newtheorem{proposition}[theorem]{Proposition}

\newtheorem{remark}[theorem]{Remark}
\newtheorem{example}[theorem]{Example}
\newtheorem{corollary}[theorem]{Corollary}
\title{Congruences on posets, relatively pseudocomplemented and Boolean posets}
\author{Ivan~Chajda and Helmut~L\"anger$^1$}
\date{}

\begin{document}

\footnotetext[1]{Corresponding author}

\maketitle
	
\begin{abstract}
The aim of the present paper is to extend the concept of a congruence from lattices to posets. We use an approach different from that used by the first author and V.~Sn\'a\v sel in \cite{CS}. By using our definition we show that congruence classes are convex. If the poset in question satisfies the Ascending Chain Condition as well as the Descending Chain Condition, then these classes turn out to be intervals. If the poset has a top element $1$ then the $1$-class of every congruence is a so-called strong filter. We study congruences on relatively pseudocomplemented posets which form a formalization of intuitionistic logic. For such posets we define so-called deductive systems and we show how they are connected with congruence kernels. We prove that every strong filter $F$ of a relatively pseudocomplemented poset induces a congruence having $F$ as its kernel. Finally, we consider Boolean posets which form a natural generalization of Boolean algebras. We show that congruences on Boolean posets in general do not share properties known from Boolean algebras, but congruence kernels of Boolean posets still have some interesting properties.
\end{abstract}
	
{\bf AMS Subject Classification:} 06A11, 06B10, 06C15, 06E75
	
{\bf Keywords:} Poset, congruence on a poset, congruence class, filter, relatively pseudocomplemented poset, Malcev operator, deductive system, ideal term, Boolean poset

\section{Introduction}

Congruences form a useful tool for studying the structure of lattices. They enable the construction of a quotient lattice and, moreover, they classify certain classes of lattices. For example, a lattice is distributive if and only if every of its filters is a class of at least one congruence, and if a lattice is relatively complemented then every of its filters is a kernel of at most one congruence. Consequently, every filter of a Boolean algebra is the kernel of exactly one congruence.

Bounded posets form a generalization of lattices. In particular, this analogy becomes apparent when the poset in question satisfies both the Ascending Chain Condition and the Descending Chain Condition (ACC and DCC, for short). In such a case, the sets $\Max L(x,y)$ of all maximal elements below $x$ and $y$, and $\Min U(x,y)$ of all minimal elements over $x$ and $y$ are not empty, consist of mutually incomparable elements and can be used for replacing the operations $\wedge$ (i.e.\ infimum) and $\vee$ (i.e.\ supremum) in lattices, respectively. We can ask if it is possible to generalize the concept of a congruence from lattices to bounded posets in some natural way in order to get results for bounded posets similar to those known for lattices. It is worth noticing that congruences on posets were already investigated by the first author and V.~Sn\'a\v sel in \cite{CS}, however, in a different way. Namely, in \cite{CS} congruences on posets were defined as equivalence relations induced by so-called $LU$-morphisms. In the present paper we use another approach. We define congruences as equivalence relations compatible with the binary operators $\Max L$ and $\Min U$ mentioned above. This means that in case of lattices these congruences coincide with lattice congruences. Moreover, every class of such a congruence is a convex subset and, if the poset in question satisfies both the ACC and the DCC, in particular if it is finite, then every of its classes is an interval, and these classes can be ordered by their upper (or equivalently lower) bounds. Hence, the quotient poset can be embedded into the poset itself.

The second reason of our new definition is that this concept of congruence can be extended to posets with additional operations. In our paper we investigate congruences in relatively pseudocomplemented posets and in Boolean posets. This is motivated by the fact that relatively pseudocomplemented posets serve as an algebraic axiomatization of intuitionistic logic, see e.g.\ \cite B, \cite K, \cite N and \cite{ST}, and Boolean posets are a natural generalization of Boolean algebras that serve as an algebraic axiomatization of classical logic. One can compare our results with those obtained for relatively pseudocomplemented semilattices in \cite{CHK} and \cite{CL01}. For relatively pseudocomplemented posets we define so-called deductive systems. We show that such systems are filters, and every strong filter $F$ induces a congruence whose kernel is just $F$.

A rather different situation appears for Boolean posets. Although Boolean posets are a natural generalization of Boolean algebras, congruences on them do not share known congruence conditions valid for Boolean algebras. Namely, congruences on Boolean posets need neither be permutable, nor regular, nor uniform. Moreover, not every filter of a Boolean poset is the kernel of some congruence. Further, the congruence lattice of a Boolean poset need neither be distributive nor at least modular. However, we prove certain relationships between filters and congruences on Booelan posets. Moreover, every reflexive binary relation on a Boolean poset compatible with the binary operators $\Max L$ and $\Min U$ turns out to be a congruence.

\section{Preliminaries}

Let $\mathbf P=(P,\le)$ be a poset, $a,b\in P$ and $A,B\subseteq P$. We define
\begin{align*}
\Max A & :=\text{set of all maximal elements of }A, \\
\Min A & :=\text{set of all minimal elements of }A, \\
A\le B & \text{ if }x\le y\text{ for all }x\in A\text{ and all }y\in B, \\
  L(A) & :=\{x\in P\mid x\le A\}, \\
  U(A) & :=\{x\in P\mid A\le x\}.
\end{align*}
Here and in the following we identify singletons with their unique element. Instead of $L(\{a\})$, $L(\{a,b\})$, $L(A\cup\{a\})$, $L(A\cup B)$ and $L\big(U(A)\big)$ we simply write $L(a)$, $L(a,b)$, $L(A,a)$, $L(A,B)$ and $LU(A)$. Analogously we proceed in similar cases.

If $(P_1,\le)$ is a set with a binary relation then a bijection $f$ from $P_1$ to $P$ is called an {\em order isomorphism} from $(P_1,\le)$ to $\mathbf P$ if for all $x,y\in P_1$, $x\le y$ is equivalent to $f(x)\le f(y)$. In such a case, $(P_1,\le)$ is a poset, too.

All posets considered in this paper are assumed to be non-empty.

Recall that a poset is said to satisfy the Ascending Chain Condition (ACC, for short) or the Descending Chain Condition (DCC, for short) if it contains no infinite ascending or descending chain, respectively. Of course, every finite poset satisfies both the ACC and the DCC.

Let $P$ be a non-empty set, $n\ge1$, $Q$ an $n$-ary operator on $P$, i.e.\ a mapping from $P^n$ to $2^P$. Moreover, let $R$ be a binary relation on $P$. We call $R$ to be {\em compatible with $Q$} if $(a_1,b_1),\ldots,(a_n,b_n)\in R$ implies that there exists some $a\in Q(a_1,\ldots,a_n)$ and some $b\in Q(b_1,\ldots,b_n)$ with $(a,b)\in R$. Of course, if $^+$ is e.g.\ a unary operation on $P$ then $R$ is compatible with $^+$ if $(a,b)\in R$ implies $(a^+,b^+)\in R$.

On any set $M$, by $\Delta$ and $\nabla$ we denote the smallest and greatest equivalence relation on $M$, respectively, i.e.
\[
\Delta:=\{(x,x)\mid x\in M\}\text{ and }\nabla:=M^2.
\]

\section{Congruences on posets}

\begin{definition}
For a poset $\mathbf P=(P,\le)$ we will call an equivalence relation on $P$ being compatible with the binary operators $\Max L$ and $\Min U$ a {\em congruence} on $\mathbf P$. Let $\Con\mathbf P$ denote the set of all congruences on $\mathbf P$ and for $\Theta\in\Con\mathbf P$ let $P/\Theta$ denote the set of all classes of $\Theta$.
\end{definition}

Obviously, if a binary relation on $P$ is compatible with the binary operators $\Max L$ and $\Min U$ then it is also compatible with the binary operators $L$ and $U$, but not vice versa.

\begin{theorem}\label{th1}
Let $\mathbf P=(P,\le)$ be a poset, $\Theta,\Phi\in\Con\mathbf P$, $(a,b)\in\Theta$ and $c,d\in P$. Then the following holds:
\begin{enumerate}[{\rm(i)}]
\item There exists some $c\in\Max L(a,b)$ and some $d\in\Min U(a,b)$ with $c,d\in[a]\Theta$,
\item $[a]\Theta$ is convex,
\item $(c,d)\in\Theta$ if and only if there exists some $(e,f)\in\Theta$ with $c,d\in[e,f]$,
\item $\Theta=\Phi$ if and only if $\Theta\cap\{(x,y)\in P^2\mid x\le y\}=\Phi\cap\{(x,y)\in P^2\mid x\le y\}$.
\end{enumerate}
\end{theorem}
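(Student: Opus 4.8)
The plan is to derive all four parts from the compatibility of $\Theta$ (and later $\Phi$) with $\Max L$ and $\Min U$, exploiting the elementary observation that $\Max L(x,x)=\Min U(x,x)=\{x\}$ for every $x\in P$, and more generally that $x\le z$ forces $\Max L(x,z)=\{x\}$ while $\Min U(x,z)=\{z\}$. These collapses turn the existential quantifier hidden in the definition of compatibility into a statement about a uniquely determined element. For (i), I would apply compatibility of $\Theta$ with $\Max L$ to the two pairs $(a,a)\in\Theta$ (reflexivity) and $(a,b)\in\Theta$. This yields some $c\in\Max L(a,a)=\{a\}$ and some $d\in\Max L(a,b)$ with $(c,d)\in\Theta$; since necessarily $c=a$, the element $d$ is the desired member of $\Max L(a,b)\cap[a]\Theta$. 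The same argument with $\Min U$ in place of $\Max L$ produces the required element of $\Min U(a,b)\cap[a]\Theta$.

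For (ii), I would take $x,y\in[a]\Theta$ and $z$ with $x\le z\le y$, and apply compatibility with $\Max L$ to $(x,y)\in\Theta$ and $(z,z)\in\Theta$. Because $x\le z$ we have $\Max L(x,z)=\{x\}$, and because $z\le y$ we have $\Max L(y,z)=\{z\}$; hence the only available output pair is $(x,z)$, forcing $(x,z)\in\Theta$ and thus $z\in[x]\Theta=[a]\Theta$.

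Parts (iii) and (iv) then follow formally from (i) and (ii). For the forward implication of (iii), I would apply (i) to $(c,d)\in\Theta$ to obtain $e\in\Max L(c,d)$ and $f\in\Min U(c,d)$ both lying in $[c]\Theta$; these satisfy $(e,f)\in\Theta$ and $c,d\in[e,f]$. For the converse, if $(e,f)\in\Theta$ and $c,d\in[e,f]$, then convexity of $[e]\Theta$ from (ii) places both $c$ and $d$ in $[e]\Theta$, whence $(c,d)\in\Theta$. In (iv) only the backward direction needs work: given $(c,d)\in\Theta$, I would use (i) to extract $e\in\Max L(c,d)\cap[c]\Theta$, so that $(e,c)$ and $(e,d)$ are comparable pairs lying in $\Theta\cap\{(x,y)\in P^2\mid x\le y\}$. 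The hypothesis moves them into $\Phi$, and symmetry and transitivity of $\Phi$ recombine them into $(c,d)\in\Phi$; by the symmetry of the roles of $\Theta$ and $\Phi$ this gives $\Theta=\Phi$.

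The main obstacle I anticipate is conceptual rather than computational: one must recognize that the ``there exists'' built into the definition of compatibility is only useful once the relevant operator value is a singleton, and then arrange the input pairs accordingly---either by pairing a general pair with a diagonal pair $(z,z)$, or by using a comparability $x\le z$ to shrink $\Max L$ or $\Min U$---so that every applied instance of compatibility is forced onto precisely the element one wants. Once this pattern is in place, each part reduces to a short manipulation with reflexivity, symmetry, transitivity and convexity.
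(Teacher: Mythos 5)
Your proposal is correct and follows essentially the same route as the paper: exploit compatibility with $\Max L$ and $\Min U$ on pairs chosen so that one operator value collapses to a singleton (via $\Max L(x,x)=\{x\}$ or $x\le z$), giving (i) and (ii), and then derive (iii) and (iv) formally. The only cosmetic difference is in (iv), where you use (i) plus transitivity of $\Phi$ directly rather than routing through the equivalence in (iii) as the paper does; both arguments are equally valid.
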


\begin{proof}
\
\begin{enumerate}[(i)]
\item Since $\Theta$ is compatible with the binary operators $\Max L$ and $\Min U$ we have
\begin{align*}
\big(a\times\Max L(a,b)\big)\cap\Theta & =\big(\Max L(a,a)\times\Max L(a,b)\big)\cap\Theta\ne\emptyset, \\
\big(a\times\Min U(a,b)\big)\cap\Theta & =\big(\Min U(a,a)\times\Min 
U(a,b)\big)\cap\Theta\ne\emptyset
\end{align*}
and hence there exists some
\[
(c,d)\in\Max L(a,b)\times\Min U(a,b)
\]
with $(a,c),(a,d)\in\Theta$. This shows
\[
(c,d)\in\big(\Max L(a,b)\times\Min U(a,b)\big)\cap([a]\Theta)^2
\]
and hence
\[
\big(\Max L(a,b)\times\Min U(a,b)\big)\cap([a]\Theta)^2\ne\emptyset.
\]
\item Assume $a\le b$ and $e\in[a,b]$. Since $\Theta$ is compatible with the binary operator $\Max L$ we have
\[
\{(a,e)\}\cap\Theta=\big(\Max L(a,e)\times\Max L(b,e)\big)\cap\Theta\ne\emptyset
\]
and hence $e\in[a]\Theta$ showing convexity of $[a]\Theta$.
\item This follows from (i) and (ii).
\item If $\Theta\cap\{(x,y)\in P^2\mid x\le y\}=\Phi\cap\{(x,y)\in P^2\mid x\le y\}$ then according to (iii) the following are equivalent:
\begin{align*}
(c,d) & \in\Theta, \\
\text{there exists some }(e,f)\in\Theta\cap\{(x,y)\in P^2\mid x\le y\}\text{ with }(c,d) & \in[e,f], \\
\text{there exists some }(e,f)\in\Phi\cap\{(x,y)\in P^2\mid x\le y\}\text{ with }(c,d) & \in[e,f], \\
(c,d) & \in\Phi.
\end{align*}
\end{enumerate}
\end{proof}

It is worth noticing that by (iv) of Theorem~\ref{th1}, every congruence on a poset is fully determined by its couples of comparable elements.

We are going to show that in a poset satisfying the ACC and the DCC, in particular in every finite poset, every congruence class is a closed interval.

\begin{proposition}\label{prop2}
Let $\mathbf P=(P,\le)$ be poset satisfying the {\rm ACC} and the {\rm DCC}, $a\in P$ and $\Theta\in\Con\mathbf P$. Then $[a]\Theta$ is a closed interval.
\end{proposition}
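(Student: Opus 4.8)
The plan is to show that the class $[a]\Theta$ possesses both a greatest and a least element; combined with its convexity (Theorem~\ref{th1}(ii)), this will identify $[a]\Theta$ with the closed interval between those two elements.

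First I would extract two consequences of the chain conditions. Since $\mathbf P$ satisfies the ACC, every $x\in[a]\Theta$ lies below a maximal element of $[a]\Theta$: the non-empty set $\{y\in[a]\Theta\mid x\le y\}$ has a maximal element by the ACC, and any such element is automatically maximal in the whole of $[a]\Theta$ (if it were strictly below some $w\in[a]\Theta$, then $w$ would belong to the same set, a contradiction). Dually, the DCC ensures that every element of the class lies above a minimal element of the class.

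The crucial step is the uniqueness of the maximal element (the minimal one being handled dually). Suppose $u$ and $v$ are both maximal in $[a]\Theta$. Then $(u,v)\in\Theta$, and Theorem~\ref{th1}(i) applied to this pair produces some $d\in\Min U(u,v)$ with $d\in[u]\Theta=[a]\Theta$. As $d\in U(u,v)$ we have $u\le d$ and $v\le d$, whence maximality of $u$ and $v$ in $[a]\Theta$ forces $u=d=v$. So $[a]\Theta$ has a unique maximal element $M$, and by the first step $M$ is the greatest element of $[a]\Theta$. Replacing $\Min U$ by $\Max L$ and the ACC by the DCC, the same argument yields a least element $m$.

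Finally, $m,M\in[a]\Theta$ together with convexity gives $[m,M]\subseteq[a]\Theta$, while $m\le x\le M$ for every $x\in[a]\Theta$ gives the reverse inclusion; hence $[a]\Theta=[m,M]$ is a closed interval. I expect the uniqueness argument to be the only real obstacle, since in a general poset a convex set can have many maximal and minimal elements; it is precisely the compatibility of $\Theta$ with $\Min U$ (exploited through Theorem~\ref{th1}(i)) that collapses the maximal elements to a single one.
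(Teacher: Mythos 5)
Your proof is correct, and it leans on the same two pillars as the paper's proof (Theorem~\ref{th1}(i) and convexity), but the route through the middle is organized differently. The paper fixes one minimal element $b$ and one maximal element $c$ of $[a]\Theta$ (a single application of the DCC and the ACC to the whole class), and then, for an \emph{arbitrary} $d\in[a]\Theta$, applies Theorem~\ref{th1}(i) to the pair $(b,d)$: the resulting $e\in\Max L(b,d)\cap[a]\Theta$ satisfies $e\le b$, so minimality forces $e=b$, whence $b\le d$. Thus the fixed minimal element is shown directly to be a lower bound of the class, and dually for $c$. You instead prove two intermediate statements: (a) under the ACC every element of the class lies below some maximal element of the class, and (b) any two maximal elements $u,v$ coincide, obtained by applying Theorem~\ref{th1}(i) to $(u,v)$ and using the $\Min U$ component rather than the $\Max L$ one. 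Step (a) is genuinely needed in your argument --- without a chain condition a unique maximal element need not be a greatest element --- and you correctly supply it; the paper's pairing of the minimal element against arbitrary elements makes this saturation lemma unnecessary, so its proof is slightly more economical. What your version buys is a cleaner structural statement: compatibility with $\Min U$ (resp.\ $\Max L$) by itself forbids a congruence class from having two distinct maximal (resp.\ minimal) elements, which isolates exactly where convexity alone would fail.
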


\begin{proof}
Since $\mathbf P$ satisfies the DCC and the ACC, $[a]\Theta$ has some minimal element $b$ and some maximal element $c$. Since $[a]\Theta$ is convex we have $[b,c]\subseteq[a]\Theta$. Now let $d\in[a]\Theta$. Because of Theorem~\ref{th1} (i) there exists some $e\in\Max L(b,d)$ and some $f\in\Min U(b,d)$ with $e,f\in[a]\Theta$. Since $e\in[a]\Theta$, $e\le b$ and $b$ is a minimal element of $[a]\Theta$ we conclude $b=e\le d$. This shows that $b$ is the smallest element of $[a]\Theta$. Dually we obtain that $c$ is the greatest element of $[a]\Theta$ and hence $[a]\Theta\subseteq[b,c]$. Altogether, this yields $[a]\Theta=[b,c]$.
\end{proof}

For a congruence $\Theta$ on a poset with top element $1$, the class $[1]\Theta$ will be called the {\em kernel} of $\Theta$.

\begin{example}\label{ex3}
The poset $\mathbf P=(P,\le)$ depicted in Fig.~1
	
\vspace*{-4mm}
	
\begin{center}
\setlength{\unitlength}{7mm}
\begin{picture}(4,8)
\put(2,1){\circle*{.3}}
\put(1,3){\circle*{.3}}
\put(3,3){\circle*{.3}}
\put(1,5){\circle*{.3}}
\put(3,5){\circle*{.3}}
\put(2,7){\circle*{.3}}
\put(1,3){\line(0,1)2}
\put(1,3){\line(1,1)2}
\put(1,3){\line(1,-2)1}
\put(3,3){\line(-1,-2)1}
\put(3,3){\line(-1,1)2}
\put(3,3){\line(0,1)2}
\put(2,7){\line(-1,-2)1}
\put(2,7){\line(1,-2)1}
\put(1.85,.3){$0$}
\put(.35,2.85){$a$}
\put(3.35,2.85){$b$}
\put(.35,4.85){$c$}
\put(3.35,4.85){$d$}
\put(1.85,7.4){$1$}
\put(-1.65,-.75){{\rm Figure~1. Non-lattice poset $\mathbf P$}}
\end{picture}
\end{center}
	
\vspace*{4mm}
	
has exactly eight non-trivial congruences:
\begin{align*}
\Theta_1 & :=[0,a]^2\cup[b,c]^2\cup[d,1]^2, \\
\Theta_2 & :=[0,a]^2\cup[b,d]^2\cup[c,1]^2, \\
\Theta_3 & :=[0,b]^2\cup[a,c]^2\cup[d,1]^2, \\
\Theta_4 & :=[0,b]^2\cup[a,d]^2\cup[c,1]^2, \\
\Theta_5 & :=[0,a]^2\cup[b,1]^2, \\ 
\Theta_6 & :=[0,c]^2\cup[d,1]^2, \\ 
\Theta_7 & :=[0,d]^2\cup[c,1]^2, \\
\Theta_8 & :=[0,b]^2\cup[a,1]^2.
\end{align*}
The Hasse diagram of $(\Con\mathbf P,\subseteq)$ is visualized in Fig.~2.

\vspace*{-4mm}

\begin{center}
\setlength{\unitlength}{7mm}
\begin{picture}(8,8)
\put(4,1){\circle*{.3}}
\put(1,3){\circle*{.3}}
\put(3,3){\circle*{.3}}
\put(5,3){\circle*{.3}}
\put(7,3){\circle*{.3}}
\put(1,5){\circle*{.3}}
\put(3,5){\circle*{.3}}
\put(5,5){\circle*{.3}}
\put(7,5){\circle*{.3}}
\put(4,7){\circle*{.3}}
\put(4,1){\line(-3,2)3}
\put(4,1){\line(-1,2)1}
\put(4,1){\line(1,2)1}
\put(4,1){\line(3,2)3}
\put(4,7){\line(-3,-2)3}
\put(4,7){\line(-1,-2)1}
\put(4,7){\line(1,-2)1}
\put(4,7){\line(3,-2)3}
\put(1,3){\line(0,1)2}
\put(1,3){\line(1,1)2}
\put(3,3){\line(-1,1)2}
\put(3,3){\line(1,1)2}
\put(5,3){\line(-1,1)2}
\put(5,3){\line(1,1)2}
\put(7,3){\line(-1,1)2}
\put(7,3){\line(0,1)2}
\put(3.75,.3){$\Delta$}
\put(.05,2.8){$\Theta_1$}
\put(2.05,2.8){$\Theta_2$}
\put(5.3,2.8){$\Theta_3$}
\put(7.3,2.8){$\Theta_4$}
\put(.05,4.8){$\Theta_5$}
\put(2.05,4.8){$\Theta_6$}
\put(5.3,4.8){$\Theta_7$}
\put(7.3,4.8){$\Theta_8$}
\put(3.75,7.4){$\nabla$}
\put(-.35,-.75){{\rm Figure~2. Congruence lattice of $\mathbf P$}}
\end{picture}
\end{center}

\vspace*{4mm}

Observe that $(\Con\mathbf P,\subseteq)$ is a lattice, but in this lattice the meet-operation does not coincide with set-theoretical intersection, namely
\[
\Theta_5\wedge\Theta_8=\Delta\ne\Delta\cup\{c,d,1\}^2=\Theta_5\cap\Theta_8.
\]
\end{example}

We can show that the smallest elements of the congruence classes $[a]\Theta$ and $[b]\Theta$ are ordered in the same way as their greatest elements provided these classes are closed intervals in $(P,\le)$.

\begin{proposition}\label{prop1}
Let $\mathbf P=(P,\le)$ be poset and $\Theta\in\Con\mathbf P$. Assume that the intervals $[a,b]$, $[c,d]$ are classes of $\Theta$. Then $a\le c$ if and only if $b\le d$.
\end{proposition}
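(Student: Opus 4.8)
The plan is to exploit directly the defining compatibility of $\Theta$ with the operators $\Min U$ and $\Max L$, feeding into it the two pairs that witness the given classes. Since $[a,b]$ and $[c,d]$ are classes of $\Theta$, their endpoints are $\Theta$-related, so $(a,b)\in\Theta$ and $(c,d)\in\Theta$. The whole argument will hinge on the elementary observation that comparability of two elements collapses the relevant operator value to a singleton: if $a\le c$ then $U(a,c)=U(c)$ and hence $\Min U(a,c)=\{c\}$, and dually if $b\le d$ then $L(b,d)=L(b)$ and hence $\Max L(b,d)=\{b\}$.

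For the forward implication I would assume $a\le c$ and apply compatibility with $\Min U$ to the pairs $(a,b),(c,d)\in\Theta$. This yields some $u\in\Min U(a,c)$ and some $v\in\Min U(b,d)$ with $(u,v)\in\Theta$. By the singleton collapse above, $u=c$, so $(c,v)\in\Theta$ places $v$ in the class $[c]\Theta=[c,d]$, giving $v\le d$. On the other hand $v\in\Min U(b,d)$ forces $v\ge d$ (and $v\ge b$), whence $v=d$; combining with $v\ge b$ gives $b\le d$. For the converse I would run the dual argument: assume $b\le d$ and apply compatibility with $\Max L$ to the pairs $(b,a),(d,c)\in\Theta$ (using symmetry of $\Theta$), obtaining $u\in\Max L(b,d)$ and $v\in\Max L(a,c)$ with $(u,v)\in\Theta$. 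Now $\Max L(b,d)=\{b\}$ forces $u=b$, so $v$ lies in $[a,b]$ and thus $v\ge a$; but $v\in\Max L(a,c)$ gives $v\le a$ (and $v\le c$), so $v=a\le c$.

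The hard part is not any of the subsequent verifications, which are routine once the setup is in place, but rather spotting the right input: one must feed the \emph{endpoint pairs} $(a,b)$ and $(c,d)$ into the compatibility condition and recognize that the hypothesis $a\le c$ (resp.\ $b\le d$) makes $\Min U(a,c)$ (resp.\ $\Max L(b,d)$) a singleton. This pins down one coordinate of the witness produced by compatibility, after which the fact that $[a,b]$ and $[c,d]$ are full classes forces the other coordinate to be an endpoint, delivering the desired comparison. The symmetry between the two implications is exactly the order-theoretic duality exchanging $\Min U$ with $\Max L$, so once one direction is written the other follows by the same scheme.
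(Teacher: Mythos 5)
Your proof is correct and follows essentially the same route as the paper's: you feed the endpoint pairs $(a,b),(c,d)\in\Theta$ into compatibility with $\Min U$ (resp.\ $\Max L$), use the fact that $a\le c$ collapses $\Min U(a,c)$ to $\{c\}$, and then let the class structure $[c]\Theta=[c,d]$ together with $v\in\Min U(b,d)$ force $v=d$ and $b\le d$. The only difference is that you write out the dual direction explicitly, whereas the paper dismisses it with ``can be proved dually.''
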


\begin{proof}
First assume $a\le c$. Since $\Theta$ is compatible with the binary operator $\Min U$ we have
\[
\big(c\times\Min U(b,d)\big)\cap\Theta=\big(\Min U(a,c)\times\Min U(b,d)\big)\cap\Theta\ne\emptyset
\]
and hence there exists some $e\in\Min U(b,d)$ with $(c,e)\in\Theta$. Since $d\le e$ and $e\in[c]\Theta=[c,d]$ we have $b\le e=d$. That $b\le d$ implies $a\le c$ can be proved dually by using the compatibility of $\Theta$ with $\Max L$. 	
\end{proof}

Using the Propositions~\ref{prop2} and \ref{prop1} we show that the quotient set $P/\Theta$ can be converted into a poset that can be embedded into $\mathbf P$.

\begin{corollary}
Let $\mathbf P=(P,\le)$ be poset satisfying the {\rm ACC} and the {\rm DCC} and $\Theta\in\Con\mathbf P$. For all $A,B\in P/\Theta$ we define $A\le B$ if $\bigwedge A\le\bigwedge B$  Then $A\mapsto\bigwedge A$ is an order isomorphism from $(P/\Theta,\le)$ to $(\{\bigwedge A\mid A\in P/\Theta\},\le)$ and hence $(P/\Theta,\le)$ is a poset that can be embedded into $\mathbf P$.
\end{corollary}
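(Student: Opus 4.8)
The plan is to show that the least-element map $f\colon A\mapsto\bigwedge A$ is the desired order isomorphism, so that almost all of the work has already been done in the two preceding propositions. First I would invoke Proposition~\ref{prop2}: since $\mathbf P$ satisfies the ACC and the DCC, every class $A\in P/\Theta$ is a closed interval, and therefore possesses a least element $\bigwedge A$ (and, dually, a greatest element $\bigvee A$). In particular the map $f\colon P/\Theta\to P$, $A\mapsto\bigwedge A$, is well defined, and by construction its range is exactly $\{\bigwedge A\mid A\in P/\Theta\}$.

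Next I would verify that $f$ is a bijection onto its range that preserves and reflects the order. Surjectivity onto the range is immediate. For injectivity, suppose $\bigwedge A=\bigwedge B$; then the classes $A$ and $B$ share the element $\bigwedge A$, and since distinct classes of an equivalence relation are disjoint, this forces $A=B$. Order preservation and order reflection are nothing but the definition $A\le B\iff\bigwedge A\le\bigwedge B$, that is, $f(A)\le f(B)$. Hence $f$ is an order isomorphism from $(P/\Theta,\le)$ onto the subposet $(\{\bigwedge A\mid A\in P/\Theta\},\le)$ of $\mathbf P$.

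It remains to note that the relation $\le$ defined on $P/\Theta$ really is a partial order: being the pullback along the bijection $f$ of the order inherited from $\mathbf P$, reflexivity and transitivity are clear, while antisymmetry is exactly the injectivity established above. Composing $f$ with the inclusion of its range into $P$ then exhibits $(P/\Theta,\le)$ as a subposet of $\mathbf P$. Finally, Proposition~\ref{prop1} shows that for two classes $[a,b]$ and $[c,d]$ one has $a\le c$ if and only if $b\le d$; thus the same order is obtained if one orders the classes by their greatest elements $\bigvee A$ instead of their least elements, so the dual map $A\mapsto\bigvee A$ is an order isomorphism as well and the chosen definition is canonical.

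The only substantive input is Proposition~\ref{prop2}, which guarantees that the least (and greatest) elements exist; everything else amounts to unwinding the definition of the quotient order. I therefore do not expect a genuine obstacle, beyond being careful to show that the defining relation on classes is a bona fide partial order and not merely a preorder---a point settled by the injectivity of $f$, equivalently by the disjointness of distinct $\Theta$-classes.
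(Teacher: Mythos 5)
Your proof is correct and follows essentially the route the paper intends (the corollary is stated without an explicit proof, but the preceding text indicates it rests on Propositions~\ref{prop2} and \ref{prop1}): Proposition~\ref{prop2} gives the least element of each class, disjointness of classes gives injectivity, and the quotient order is an order by pullback along the injection. Your closing remark invoking Proposition~\ref{prop1} correctly accounts for why ordering by greatest elements yields the same order, which is exactly the role that proposition plays in the paper.
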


An interesting connection between classes of congruences in posets is the following one. We will use it in the proof of Corollary~\ref{cor1}.

\begin{lemma}\label{lem5}
Let $\mathbf P=(P,\le)$ be a poset, $b\in P$, $\Theta\in\Con\mathbf P$ and $(a,c)\in\Theta$. Then the following holds:
\begin{enumerate}[{\rm(i)}]
\item If $[b]\Theta\cap U(c)\ne\emptyset$ then $\Max L(a,b)\cap[c]\Theta\ne\emptyset$,
\item if $[b]\Theta\cap L(c)\ne\emptyset$ then $\Min U(a,b)\cap[c]\Theta\ne\emptyset$.
\end{enumerate}
\end{lemma}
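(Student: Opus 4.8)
The plan is to derive (i) directly from the compatibility of $\Theta$ with the binary operator $\Max L$, and then to obtain (ii) by the order-dual argument using compatibility with $\Min U$. The crux in both cases is to feed the right two pairs of $\Theta$ into the operator and then to notice that one of the two resulting sets of maximal lower bounds (resp.\ minimal upper bounds) collapses to a singleton.

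For (i), I would first unpack the hypothesis $[b]\Theta\cap U(c)\ne\emptyset$ as the existence of some $b'$ with $(b,b')\in\Theta$ and $c\le b'$. Together with the given pair $(a,c)\in\Theta$ this furnishes two pairs in $\Theta$. Applying compatibility of $\Theta$ with $\Max L$ to $(a,c)$ and $(b,b')$ then yields some $x\in\Max L(a,b)$ and some $y\in\Max L(c,b')$ with $(x,y)\in\Theta$.

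The \emph{key step} is the observation that $\Max L(c,b')=\{c\}$. Since $c\le b'$, every lower bound of $c$ is a lower bound of $b'$, so $L(c,b')=L(c)$, and $c$ is the greatest element of $L(c)$. Hence $y=c$, and consequently $x\in\Max L(a,b)$ with $(x,c)\in\Theta$, i.e.\ $x\in[c]\Theta$. This exhibits the desired element of $\Max L(a,b)\cap[c]\Theta$.

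Statement (ii) should follow by dualizing: write the hypothesis as $(b,b'')\in\Theta$ with $b''\le c$, apply compatibility of $\Theta$ with $\Min U$ to $(a,c)$ and $(b,b'')$ to obtain $x\in\Min U(a,b)$ and $y\in\Min U(c,b'')$ with $(x,y)\in\Theta$, and then use $b''\le c$ to conclude $\Min U(c,b'')=\{c\}$, whence $y=c$ and $x\in\Min U(a,b)\cap[c]\Theta$. I do not anticipate a genuine obstacle here; the only care needed is the bookkeeping of which coordinate of each pair is fed into the operator, together with the singleton collapse that pins down $y$ — everything else is immediate from the definitions.
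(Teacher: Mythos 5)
Your proposal is correct and follows essentially the same argument as the paper: both feed the pairs $(a,c)$ and $(b,d)$ (your $(b,b')$) into the compatibility of $\Theta$ with $\Max L$ (resp.\ $\Min U$) and then use the singleton collapse $\Max L(c,d)=\{c\}$ (resp.\ $\Min U(c,d)=\{c\}$) to pin down the second coordinate. No gaps; the bookkeeping of coordinates and the justification of the collapse are exactly as in the paper's proof.
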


\begin{proof}
\
\begin{enumerate}[(i)]
\item If $d\in [b]\Theta\cap U(c)$ then because of $(a,c),(b,d)\in\Theta$ and $\Max L(c,d)=c$ there exists some $e\in\Max L(a,b)$ with $(e,c)\in\Theta$ and hence $e\in \Max L(a,b)\cap[c]\Theta$.
\item If $d\in [b]\Theta\cap L(c)$ then because of $(a,c),(b,d)\in\Theta$ and $\Min U(c,d)=c$ there exists some $e\in\Min U(a,b)$ with $(e,c)\in\Theta$ and hence $e\in \Min U(a,b)\cap[c]\Theta$.
\end{enumerate}
\end{proof}

Filters in posets are defined in various ways by different authors. We prefer the following definition.

\begin{definition}\label{def2}
A subset $F$ of a poset $\mathbf P=(P,\le)$ is called a {\em filter} of $\mathbf P$ if it satisfies the following conditions:
\begin{enumerate}[{\rm(i)}]
\item $F\ne\emptyset$,
\item $x\in F$, $y\in P$ and $x\le y$ imply $y\in F$.
\end{enumerate}
$F$ is called a {\em strong filter} of $\mathbf P$ if, moreover,
\begin{enumerate}
\item[{\rm(iii)}] $L(x,y)\cap F\ne\emptyset$ for all $x,y\in F$.
\end{enumerate}
\end{definition}

Now we prove the expected fact that the kernel of a congruence on a poset with top element $1$ is a strong filter.

\begin{corollary}\label{cor1}
Let $\mathbf P=(P,\le,1)$ be a poset with top element $1$, $a,b\in P$ and $\Theta\in\Con(P,\le)$ and put $F:=[1]\Theta$. Then the following holds:
\begin{enumerate}[{\rm(i)}]
\item If $a,b\in F$ then $\Max L(a,b)\cap F\ne\emptyset$,
\item if $b\in F$ then $\Max L(a,b)\cap[a]\Theta\ne\emptyset$,
\item if $a\in F$ then $\Min U(a,b)\cap F\ne\emptyset$,
\item $F$ is a strong filter of $\mathbf P$.
\end{enumerate}
\end{corollary}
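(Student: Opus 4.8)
The plan is to deduce parts (i)--(iii) directly from Lemma~\ref{lem5} by instantiating the lemma's element $c$ as either the top element $1$ or as $a$, and then to assemble part (iv) from these together with the convexity result of Theorem~\ref{th1}(ii). The whole argument rests on two special features of the top element, namely $U(1)=\{1\}$ and $L(1)=P$, and on the observation that if $x\in F$ then $[x]\Theta=[1]\Theta=F$.

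For part (i) I would apply Lemma~\ref{lem5}(i) with its pair $(a,c)$ taken to be $(a,1)$; this pair lies in $\Theta$ because $a\in F=[1]\Theta$. Since $b\in F$ we have $[b]\Theta=F$, and because $1\in F\cap U(1)$ the hypothesis $[b]\Theta\cap U(1)\ne\emptyset$ is satisfied. The lemma then yields $\Max L(a,b)\cap[1]\Theta=\Max L(a,b)\cap F\ne\emptyset$, which is exactly (i). Part (ii) is the same application but with $c$ replaced by $a$: now $(a,a)\in\Theta$ by reflexivity, and the hypothesis $[b]\Theta\cap U(a)\ne\emptyset$ holds because $1\in F=[b]\Theta$ while $a\le1$ gives $1\in U(a)$; the conclusion is $\Max L(a,b)\cap[a]\Theta\ne\emptyset$. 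Part (iii) is the order dual, using Lemma~\ref{lem5}(ii) with $(a,c)=(a,1)\in\Theta$; here the hypothesis $[b]\Theta\cap L(1)\ne\emptyset$ is automatic since $L(1)=P$, so that $[b]\Theta\cap L(1)=[b]\Theta\ne\emptyset$, and the lemma delivers $\Min U(a,b)\cap F\ne\emptyset$.

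For part (iv) I would verify the three defining conditions of a strong filter from Definition~\ref{def2} in turn. Non-emptiness is immediate because $1\in[1]\Theta=F$. For upward closure, suppose $x\in F$ and $x\le y$; then $x\le y\le1$ with both $x$ and $1$ lying in the class $[1]\Theta$, so convexity of $[1]\Theta$ (Theorem~\ref{th1}(ii)) forces $y\in[1]\Theta=F$. Finally, for the strong condition, given $x,y\in F$ part (i) of this corollary provides an element of $\Max L(x,y)\cap F$, and since $\Max L(x,y)\subseteq L(x,y)$ this same element witnesses $L(x,y)\cap F\ne\emptyset$.

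No step here is genuinely hard once Lemma~\ref{lem5} is available; the only point requiring care is the correct matching of variables when instantiating that lemma, in particular the choice to set $c=1$ in (i) and (iii) but $c=a$ in (ii), together with the two boundary identities $U(1)=\{1\}$ and $L(1)=P$ that render the respective hypotheses trivially true. It is also worth being explicit that upward closure is obtained from convexity rather than argued afresh.
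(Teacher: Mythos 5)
Your proof is correct and follows essentially the same route as the paper's: parts (i)--(iii) are obtained by instantiating Lemma~\ref{lem5} with exactly the same choices ($c=1$ for (i) and (iii), $c=a$ for (ii)), and part (iv) is assembled from $1\in F$, convexity via Theorem~\ref{th1}(ii), and part (i). The only difference is that you spell out the verification of the lemma's hypotheses (using $U(1)=\{1\}$, $L(1)=P$, and $[x]\Theta=F$ for $x\in F$), which the paper leaves implicit.
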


\begin{proof}
\
\begin{enumerate}[(i)]
\item This follows from (i) of Lemma~\ref{lem5} by assuming $a,b\in F$ and putting $c=1$.
\item This follows from (i) of Lemma~\ref{lem5} by assuming $b\in F$ and putting $c=a$.
\item This follows from (ii) of Lemma~\ref{lem5} by assuming $a\in F$ and putting $c=1$.
\item Since $1\in F$ we have $F\ne\emptyset$. Condition (ii) of Definition~\ref{def2} follows from (ii) of Theorem~\ref{th1}. The rest follows from (i).
\end{enumerate}	
\end{proof}

\section{Relatively pseudocomplemented posets}

Let $\mathbf P=(P,\le)$ be a poset. If for all $x,y\in P$ there exists a greatest element $z$ of $P$ satisfying $L(x,z)\le y$ then $\mathbf P$ is called {\em relatively pseudocomplemented} and $z$ is called the {\em pseudocomplement of $x$ with respect to $y$}. We write $z=x*y$ and denote relatively pseudocomplemented posets in the form $(P,\le,*)$.

\begin{example}\label{ex1}
Consider the poset from Example~\ref{ex3}. This poset is relatively pseudocomplemented and the table for the binary operation $*$ is as follows:
\[
\begin{array}{r|rrrrrr}
* & 0 & a & b & c & d & 1 \\
\hline
0 & 1 & 1 & 1 & 1 & 1 & 1 \\
a & b & 1 & b & 1 & 1 & 1 \\
b & a & a & 1 & 1 & 1 & 1 \\
c & 0 & a & b & 1 & d & 1 \\
d & 0 & a & b & c & 1 & 1 \\
1 & 0 & a & b & c & d & 1
\end{array}
\]	
\end{example}

In the following lemma we list several properties of relative pseudocomplementation in posets showing that the operator $*$ shares many properties with usual implication in intuitionistic logics.

\begin{lemma}\label{lem1}
Let $\mathbf P=(P,\le,*)$ be a relatively pseudocomplemented poset and $a,b,c\in P$. Then the following holds:
\begin{enumerate}[{\rm(i)}]
\item $a\le b*c$ if and only if $L(a,b)\le c$, and $a\le b*c$ if and only if $b\le a*c$.
\item $a*a$ is the greatest element $1$ of $\mathbf P$, and $1*a=a$,
\item $a*b=1$ if and only if $a\le b$,
\item $b\le a*b$,
\item $a\le(a*b)*b$,
\item $a\le b$ implies $c*a\le c*b$ and $b*c\le a*c$,
\item $\big((a*b)*b\big)*b=a*b$,
\item $a*b\le\big((a*b)*a\big)*b$ if and only if $(a*b)*a\le(a*b)*b$,
\item $a*b\le\big((a*b)*b\big)*a$ if and only if $(a*b)*b\le(a*b)*a$,
\item $L(a,a*b)=L(a,b)$.
\end{enumerate}
\end{lemma}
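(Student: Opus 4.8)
The plan is to prove part (i) first and directly from the definition, since it is the Galois-type adjunction on which essentially everything else rests. Recall that $b*c$ is by definition the greatest element $z$ satisfying $L(b,z)\le c$; in particular $L(b,b*c)\le c$ always holds, a fact I will use repeatedly. For the first equivalence in (i), if $L(a,b)\le c$ then $z=a$ witnesses the defining condition $L(b,z)\le c$ (using $L(a,b)=L(b,a)$), so $a\le b*c$ by maximality of $b*c$; conversely, if $a\le b*c$ then $L(a,b)\subseteq L(b*c,b)\le c$. The second equivalence in (i) is then immediate from the symmetry $L(a,b)=L(b,a)$, since both $a\le b*c$ and $b\le a*c$ unfold to the same condition $L(a,b)\le c$.

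Next I would dispatch (ii)--(vi) as short consequences. For (ii), the condition $L(a,z)\le a$ holds for every $z$, so $a*a$ is the greatest element of $P$, which I name $1$; and $L(1,z)=L(z)\le a$ is equivalent to $z\le a$, giving $1*a=a$. Parts (iii), (iv), (v) are one-line applications of (i): respectively $a*b=1\iff 1\le a*b\iff L(1,a)\le b\iff a\le b$; $b\le a*b\iff L(a,b)\le b$, which always holds; and $a\le(a*b)*b\iff L(a,a*b)\le b$, which holds by the definition of $a*b$. For the two monotonicity statements in (vi), I again feed (i): to obtain $c*a\le c*b$ it suffices to check $L(c*a,c)\le b$, which follows from $L(c,c*a)\le a\le b$; the antitone statement $b*c\le a*c$ follows from $L(a,b*c)\subseteq L(b,b*c)\le c$.

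The remaining parts combine these. For (vii), writing $u=a*b$, the inequality $u\le(u*b)*b$ is just (v), while $a\le u*b$ (again (v)) together with antitonicity from (vi) yields $(u*b)*b\le a*b=u$; the two inequalities give equality. The cleanest observations are (viii) and (ix): applying the adjunction (i) to each side turns both inequalities into a single lower-cone condition. For (viii), with $u=a*b$, both $a*b\le((a*b)*a)*b$ and $(a*b)*a\le(a*b)*b$ unfold to $L(u,u*a)\le b$, and these coincide because $L(u,u*a)=L(u*a,u)$; part (ix) is identical with the inner $a$ replaced by $b$. Finally (x) is a direct double inclusion: $L(a,a*b)\le b$ by the definition of $a*b$ gives $L(a,a*b)\subseteq L(a,b)$, and $b\le a*b$ from (iv) gives $L(a,b)\subseteq L(a,a*b)$.

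I expect the only genuine subtlety to be getting part (i) exactly right --- in particular keeping straight which element plays the role of the witness $z$ in the maximality argument and invoking $L(a,b)=L(b,a)$ at the correct moment. Once (i) is in hand the rest is bookkeeping, and the mild cleverness in (viii) and (ix) is simply recognizing that both sides reduce, via (i), to the \emph{same} lower-cone inequality.
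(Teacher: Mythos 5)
Your proof is correct and follows essentially the same route as the paper: establish the adjunction in (i) directly from the definition of $*$, then derive (ii)--(x) from it, with (vii) obtained by combining (v) and the antitonicity in (vi), and (viii)--(ix) by reducing both sides to the same lower-cone condition via (i). The only difference is that you spell out details (e.g.\ in (i), (viii), (ix)) that the paper compresses into ``follows from (i)''.
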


\begin{proof}
\
\begin{enumerate}[(i)]
\item The first statement follows directly from the definition of $*$, and the second follows from the first.
\item follows directly from the definition of $*$.
\item According to (i) the following are equivalent: $a*b=1$, $1\le a*b$, $L(1,a)\le b$, $a\le b$.
\item This follows from $L(a,b)\le b$.
\item This follows from (i).
\item Assume $a\le b$. Then any of the following statements implies the next one: $c*a\le c*a$, $L(c*a,c)\le a$, $L(c*a,c)\le b$, $c*a\le c*b$. Moreover, any of the following statements implies the next one: $b*c\le b*c$, $L(b*c,b)\le c$, $L(b*c,a)\le c$, $b*c\le a*c$.
\item From (v) we obtain $\big((a*b)*b\big)*b\le a*b$ by (vi). On the other hand we have $a*b\le\big((a*b)*b\big)*b$ again according to (v).
\item This follows from (i).
\item This follows from (i).
\item According to (iv) we have $b\le a*b$ and hence $L(a,b)\subseteq L(a,a*b)$. Conversely, by the definition of relative pseudocomplementation we derive $L(a,a*b)\le b$ whence $L(a,a*b)\subseteq L(a,b)$.
\end{enumerate}	
\end{proof}

The concept of a congruence can be extended to pseudocomplemented posets as follows.

\begin{definition}\label{def1}
Let $\mathbf P=(P,\le,*)$ be a relatively pseudocomplemented poset and $\Theta$ a binary relation on $P$. Then we will call $\Theta$ a {\em congruence} on $\mathbf P$ if $\Theta$ is a congruence on $(P,\le)$ being compatible with $*$. Let $\Con\mathbf P$ denote the set of all congruences on $\mathbf P$.
\end{definition}

\begin{example}\label{ex2}
Consider again the poset $\mathbf P$ from Fig.~1 but now with the relative pseudocomplementation $*$. It is interesting to see how restrictive is compatibility with $*$. According to Example~\ref{ex3}, $\mathbf P$ has exactly ten different congruences, but only four of them are compatible with $*$, namely $\Delta$, $\Theta_5$, $\Theta_8$ and $\nabla$.
\end{example}

It is well known that in congruence permutable varieties of algebras there exists a so-called Malcev term, i.e.\ a ternary term $t(x,y,z)$ satisfying the identities
\[
t(x,x,z)\approx z\text{ and }t(x,z,z)\approx x.
\]
Of course, we cannot expect that such a term exists also in relatively pseudocomplemented posets but, surprisingly, we are able to derive a ternary operator sharing similar properties.

Let $P$ be a non-empty set. We call a ternary operator $T(x,y,z)$ on $P$ a {\em Malcev operator} if it satisfies the identities
\[
T(x,x,z)\approx z\text{ and }T(x,z,z)\approx x.
\]

\begin{lemma}\label{lem4}
Let $(P,\le,*)$ be a relatively pseudocomplemented poset satisfying the {\rm ACC} and define
\begin{enumerate}[{\rm(1)}]
\item $T(x,y,z):=\Max L\big((x*y)*z,(z*y)*x\big)$ for all $x,y,z\in P$.
\end{enumerate}
Then $T(x,y,z)$ is a Malcev operator.
\end{lemma}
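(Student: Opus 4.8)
The plan is to verify the two Malcev identities $T(x,x,z)\approx z$ and $T(x,z,z)\approx x$ directly, in each case simplifying the two arguments of $\Max L$ by means of Lemma~\ref{lem1}. The guiding observation is that for any $w\in P$ the principal down-set $L(w)$ has greatest element $w$, so $\Max L(w)=\{w\}=w$; moreover, if $u\le v$ then $L(u,v)=L(u)$, and hence $\Max L(u,v)=u$ whenever the two entries are comparable. Thus the whole task reduces to showing that in each of the two substitutions the arguments of $\Max L$ become comparable with the smaller one being exactly $z$ (resp.\ $x$).

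For the first identity I would compute the two entries of $T(x,x,z)=\Max L\big((x*x)*z,(z*x)*x\big)$. By Lemma~\ref{lem1}(ii) we have $x*x=1$ and $1*z=z$, so the first entry equals $z$. For the second entry, Lemma~\ref{lem1}(v) applied with $a=z$ and $b=x$ gives $z\le(z*x)*x$. Hence the two arguments are comparable with $z$ the smaller, so $L\big((x*x)*z,(z*x)*x\big)=L(z)$ and therefore $T(x,x,z)=\Max L(z)=z$, as required.

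The second identity is entirely dual. In $T(x,z,z)=\Max L\big((x*z)*z,(z*z)*x\big)$ Lemma~\ref{lem1}(ii) yields $z*z=1$ and $1*x=x$, so the second entry equals $x$, while Lemma~\ref{lem1}(v) applied with $a=x$ and $b=z$ gives $x\le(x*z)*z$. Again the two arguments are comparable with $x$ the smaller, so $L\big((x*z)*z,(z*z)*x\big)=L(x)$ and $T(x,z,z)=\Max L(x)=x$.

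There is essentially no hard step: the argument is a careful bookkeeping of Lemma~\ref{lem1}(ii) and (v) together with the elementary fact about principal down-sets. The only point worth flagging is the role of the ACC hypothesis. It is what guarantees that $\Max L$ of an arbitrary pair is non-empty and hence that $T$ is everywhere defined as an operator; for the two identities themselves the ACC is not actually used, since the relevant lower sets $L(z)$ and $L(x)$ are principal and automatically possess a greatest, hence unique maximal, element.
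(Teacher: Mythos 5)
Your proof is correct and matches the paper's own argument: both verify the two identities by computing the entries of $\Max L$, using Lemma~\ref{lem1}(ii) to reduce $(x*x)*z$ to $z$ (resp.\ $(z*z)*x$ to $x$) and the inequality $z\le(z*x)*x$ (resp.\ $x\le(x*z)*z$), i.e.\ Lemma~\ref{lem1}(v), to conclude that $\Max L$ of the comparable pair is the smaller element. Your closing remark that the ACC is only needed to make $T$ everywhere defined, not for the identities themselves, is a correct and worthwhile observation that the paper leaves implicit.
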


\begin{proof}
For $a,c\in P$ according to Lemma~\ref{lem1} (i), (ii) and (iv) we have
\begin{align*}
T(a,a,c) & =\Max L\big((a*a)*c,(c*a)*a\big)=\Max L\big(1*c,(c*a)*a\big)= \\
         & =\Max L\big(c,(c*a)*a\big)=\Max L(c)=c, \\
T(a,c,c) & =\Max L\big((a*c)*c,(c*c)*a\big)=\Max L\big((a*c)*c,1*a\big)= \\
         & =\Max L\big((a*c)*c,a\big)=\Max L(a)=a.
\end{align*}
\end{proof}

Since the operator $T(x,y,z)$ is composed by means of the binary operators $\Max L$ and $*$, a binary relation compatible with $\Max L$ and $*$ is also compatible with $T(x,y,z)$.

Applying the Malcev operator from Lemma~\ref{lem4} we can prove the following result.

\begin{theorem}\label{th3}
Let $\mathbf P=(P,\le,*)$ be a relatively pseudocomplemented poset satisfying the {\rm ACC}, $T(x,y,z)$ the Malcev operator on $P$ defined by {\rm(1)} and $\Theta$ a reflexive binary relation on $P$ being compatible with the binary operators $\Max L$, $\Min U$ and $*$. Then $\Theta\in\Con\mathbf P$.
\end{theorem}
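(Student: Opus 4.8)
The plan is to read off what is actually required from Definition~\ref{def1}: to conclude $\Theta\in\Con\mathbf P$ it suffices to check that $\Theta$ is an equivalence relation on $P$ which is compatible with $\Max L$, $\Min U$ and $*$. Compatibility with all three operators is part of the hypothesis, and $\Theta$ is assumed reflexive, so the whole burden reduces to proving that $\Theta$ is symmetric and transitive. The engine for both is the Malcev operator $T$ of Lemma~\ref{lem4} together with the observation recorded just after that lemma: any relation compatible with $\Max L$ and $*$ --- in particular $\Theta$ --- is compatible with $T$.

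For symmetry I would start from $(a,b)\in\Theta$. Reflexivity gives $(a,a),(b,b)\in\Theta$, and I would feed the triple of pairs $(b,b),(a,b),(a,a)$ into the compatibility of $\Theta$ with $T$. By the definition of compatibility with a ternary operator this yields witnesses $u\in T(b,a,a)$ and $v\in T(b,b,a)$ with $(u,v)\in\Theta$. The two Malcev identities then collapse these values to singletons: $T(b,a,a)=b$ by $T(x,z,z)\approx x$ and $T(b,b,a)=a$ by $T(x,x,z)\approx z$. Hence $u=b$, $v=a$, and $(b,a)\in\Theta$.

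Transitivity runs along the same lines. Given $(a,b),(b,c)\in\Theta$ and the reflexive pair $(b,b)\in\Theta$, I would apply compatibility with $T$ to the triple $(a,b),(b,b),(b,c)$, obtaining some $u\in T(a,b,b)$ and $v\in T(b,b,c)$ with $(u,v)\in\Theta$; again the Malcev identities force $T(a,b,b)=a$ and $T(b,b,c)=c$, so $(a,c)\in\Theta$. With reflexivity, symmetry and transitivity established, $\Theta$ is an equivalence relation, and together with the assumed compatibility with $\Max L$, $\Min U$ and $*$ it is a congruence on $\mathbf P$.

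The one delicate point --- and presumably the reason the result is phrased with a set-valued operator rather than a genuine term --- is that compatibility with $T$ is \emph{existential}: it only guarantees that \emph{some} pair $(u,v)$ with $u\in T(\cdots)$ and $v\in T(\cdots)$ lies in $\Theta$, whereas the classical Malcev argument for term operations pins $u$ and $v$ down uniquely. What rescues the argument is exactly that $T$ is invoked only at the diagonal patterns $(x,z,z)$ and $(x,x,z)$, where Lemma~\ref{lem4} forces the output to be a singleton; there the existential witness has no freedom, and the usual computation goes through verbatim. I therefore expect no real obstacle beyond the bookkeeping of ensuring that every evaluation of $T$ occurs at one of these two singleton-valued patterns.
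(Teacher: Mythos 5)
Your proof is correct and follows essentially the same route as the paper's: the paper likewise reduces everything to symmetry and transitivity, establishing symmetry via the pairs $(a,a),(a,b),(b,b)$ fed into $T$ (giving $T(a,a,b)=b$ and $T(a,b,b)=a$) and transitivity via $(a,b),(b,b),(b,c)$ (giving $T(a,b,b)=a$ and $T(b,b,c)=c$), exactly as you do up to a mirror-image arrangement of the symmetry step. Your closing observation --- that the existential nature of compatibility is harmless precisely because $T$ is only ever evaluated at the diagonal patterns where Lemma~\ref{lem4} forces singleton values --- is the same point the paper exploits implicitly when it writes the intersections as $\{(b,a)\}\cap\Theta$ and $\{(a,c)\}\cap\Theta$.
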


\begin{proof}
If $(a,b),(b,c)\in R$ then
\begin{align*}
\{(b,a)\}\cap\Theta & =\big(T(a,a,b)\times T(a,b,b)\big)\cap\Theta\ne\emptyset, \\
\{(a,c)\}\cap\Theta & =\big(T(a,b,b)\times T(b,b,c)\big)\cap\Theta\ne\emptyset
\end{align*}
and hence $(b,a),(a,c)\in\Theta$ showing symmetry and transitivity of $\Theta$. The remaining part of the proof follows from the definition of a congruence on $\mathbf P$.
\end{proof}

\begin{example}
Consider the relatively pseudocomplemented poset $\mathbf P=(P,\le,*)$ depicted in Fig.~3.
	
\vspace*{-4mm}
	
\begin{center}
\setlength{\unitlength}{7mm}
\begin{picture}(4,10)
\put(2,1){\circle*{.3}}
\put(1,3){\circle*{.3}}
\put(3,3){\circle*{.3}}
\put(1,5){\circle*{.3}}
\put(3,5){\circle*{.3}}
\put(1,7){\circle*{.3}}
\put(3,7){\circle*{.3}}
\put(2,9){\circle*{.3}}
\put(1,3){\line(0,1)4}
\put(1,3){\line(1,1)2}
\put(1,3){\line(1,-2)1}
\put(3,3){\line(-1,-2)1}
\put(3,3){\line(-1,1)2}
\put(3,3){\line(0,1)4}
\put(2,9){\line(-1,-2)1}
\put(2,9){\line(1,-2)1}
\put(1,5){\line(1,1)2}
\put(3,5){\line(-1,1)2}
\put(1.85,.3){$0$}
\put(.35,2.85){$a$}
\put(3.35,2.85){$b$}
\put(.35,4.85){$c$}
\put(3.35,4.85){$d$}
\put(.35,6.85){$e$}
\put(3.35,6.85){$f$}
\put(1.85,9.4){$1$}
\put(-4.1,-.75){{\rm Figure~3. Relatively pseudocomplemented poset}}
\end{picture}
\end{center}
	
\vspace*{4mm}
	
The table for the binary operation $*$ is as follows:
\[
\begin{array}{r|rrrrrrrr}
* & 0 & a & b & c & d & e & f & 1 \\
\hline
0 & 1 & 1 & 1 & 1 & 1 & 1 & 1 & 1 \\
a & b & 1 & b & 1 & 1 & 1 & 1 & 1 \\
b & a & a & 1 & 1 & 1 & 1 & 1 & 1 \\
c & 0 & a & b & 1 & d & 1 & 1 & 1 \\
d & 0 & a & b & c & 1 & 1 & 1 & 1 \\
e & 0 & a & b & c & d & 1 & f & 1 \\
f & 0 & a & b & c & d & e & 1 & 1 \\
1 & 0 & a & b & c & d & e & f & 1
\end{array}
\]
The relatively pseudocomplemented poset $\mathbf P$ has only the following two non-trivial congruences:
\begin{align*}
\Theta_1 & :=[0,a]^2\cup[b,1]^2, \\
\Theta_2 & :=[0,b]^2\cup[a,1]^2.
\end{align*}
Observe that $(\Con\mathbf P,\subseteq)$ is a lattice, but again
\[
\Theta_1\wedge\Theta_2=\Delta\ne\Delta\cup\{c,d,e,f,1\}^2=\Theta_1\cap\Theta_2.
\]
\end{example}

The next lemma shows some connections between a congruence on a relatively pseudocomplemented poset and its kernel.

\begin{lemma}\label{lem3}
Let $\mathbf P=(P,\le,*)$ be a relatively pseudocomplemented poset, $a,b\in P$ and $\Theta\in\Con(P,*)$. Then the following holds:
\begin{enumerate}[{\rm(i)}]
\item If $(a,b)\in\Theta$ then $a*b,b*a\in[1]\Theta$,
\item if $a*b,b*a\in[1]\Theta$ and $\big((a*b)*b,(b*a)*a\big)\in\Theta$ then $(a,b)\in\Theta$.
\end{enumerate}
\end{lemma}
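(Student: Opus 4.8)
The plan is to reduce both implications to the single structural fact recorded in Lemma~\ref{lem1}(ii), namely that $x*x=1$ and $1*x=x$, combined with the compatibility of $\Theta$ with the binary operation $*$ and the fact that $\Theta$, being an equivalence relation, is reflexive, symmetric and transitive. Recall that compatibility with the binary operation $*$ means that $(x_1,y_1),(x_2,y_2)\in\Theta$ implies $(x_1*x_2,y_1*y_2)\in\Theta$, exactly as for the unary $^+$ discussed in the Preliminaries.

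For (i), assume $(a,b)\in\Theta$. First I would pair this with the reflexive couple $(b,b)\in\Theta$; compatibility with $*$ then yields $(a*b,b*b)\in\Theta$, and since $b*b=1$ by Lemma~\ref{lem1}(ii) this gives $a*b\in[1]\Theta$. Symmetrically, using $(b,a)\in\Theta$ (symmetry of $\Theta$) together with $(a,a)\in\Theta$ gives $(b*a,a*a)\in\Theta$, and $a*a=1$ yields $b*a\in[1]\Theta$. This settles (i).

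For (ii), the starting points are the hypotheses $a*b,b*a\in[1]\Theta$, that is, $(a*b,1),(b*a,1)\in\Theta$. I would combine the first of these, via $*$, with the reflexive couple $(b,b)$: compatibility gives $\big((a*b)*b,1*b\big)\in\Theta$, and $1*b=b$ by Lemma~\ref{lem1}(ii), so $\big((a*b)*b,b\big)\in\Theta$. The same manoeuvre applied to $(b*a,1)$ and $(a,a)$ yields $\big((b*a)*a,a\big)\in\Theta$. Now the remaining hypothesis $\big((a*b)*b,(b*a)*a\big)\in\Theta$ supplies the missing link: chaining $b$, $(a*b)*b$, $(b*a)*a$, $a$ through symmetry and transitivity of $\Theta$ produces $(a,b)\in\Theta$.

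I do not anticipate a genuine obstacle, since both parts are pure manipulations with $\Theta$ and the identities of Lemma~\ref{lem1}(ii); the whole content lies in choosing, at each application of compatibility, the right reflexive couple to substitute into one argument of $*$. The only point requiring care is to invoke the correct reading of compatibility with $*$ (treating $*$ as an ordinary binary operation rather than as a set-valued operator) and to keep careful track of which argument receives the reflexive couple in each step.
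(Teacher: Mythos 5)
Your proof is correct and matches the paper's own argument essentially step for step: part (i) is compatibility of $\Theta$ with $*$ applied to a reflexive couple together with $x*x=1$, and part (ii) is the same chain $a=1*a\mathrel\Theta(b*a)*a\mathrel\Theta(a*b)*b\mathrel\Theta 1*b=b$ that the paper uses, just traversed in the opposite direction. Your closing remark about reading compatibility in the single-valued sense for $*$ is exactly the right point of care and agrees with the paper's convention from the Preliminaries.
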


\begin{proof}
\
\begin{enumerate}[(i)]
\item If $(a,b)\in\Theta$ then $a*b,b*a\in[a*a]\Theta=[1]\Theta$.
\item If $a*b,b*a\in[1]\Theta$ and $\big((a*b)*b,(b*a)*a\big)\in\Theta$ then
\[
a=1*a\mathrel\Theta(b*a)*a\mathrel\Theta(a*b)*b\mathrel\Theta1*b=b.
\]
\end{enumerate}	
\end{proof}

We demonstrate the aforementioned result by means of Example~\ref{ex1}. Let $\mathbf P$ denote the poset from this example and $\Theta_1$ the congruence on $\mathbf P$ from Example~\ref{ex2}. Then $b*c=1\in[1]\Theta_1$, $c*b=b\in[1]\Theta_1$ and $\big((b*c)*c,(c*b)*b\big)=(1*c,b*b)=(c,1)\in\Theta_1$, thus $(b,c)\in\Theta_1$.

In Lemma~\ref{lem3} we described a certain connection between a congruence $\Theta$ on a relatively pseudocomplemented poset and its kernel $[1]\Theta$. In order to analyze this connection more deeply, we define the following concept.

\begin{definition}
A subset $D$ of a relatively pseudocomplemented poset $\mathbf P=(P,\le,*)$ is called a {\em deductive system} of $\mathbf P$ if it satisfies the following conditions:
\begin{enumerate}[{\rm(i)}]
\item $1\in D$,
\item $x\in D$, $y\in P$ and $x*y\in D$ imply $y\in D$.
\end{enumerate}
\end{definition}

Evidently, for each $\Theta\in\Con\mathbf P$ the set $[1]\Theta$ is a deductive system of $\mathbf P$. Namely, $1\in[1]\Theta$ and if $x\in[1]\Theta$, $y\in P$ and $x*y\in[1]\Theta$ then $y=1*y\in[x*y]\Theta=[1]\Theta$.

We are able to characterize deductive systems by means of so-called ideal terms. Recall that the term $p(x_1,\ldots,x_n,y_1,\ldots,y_k)$ is called an {\em ideal term} in $y_1,\ldots,y_k$ if
\[
p(a_1,\ldots,a_n,1,\ldots,1)=1\text{ for all }a_1,\ldots,a_n\in P.
\]
A {\em subset} $D$ of $P$ is called {\em closed} with respect to the ideal term $p(x_1,\ldots,x_n,y_1,\ldots,y_k)$ if $p(a_1,\ldots,a_n,b_1,\ldots,b_k)\in D$ for all $a_1,\ldots,a_n\in P$ and all $b_1,\ldots,b_k\in D$.

In our case, a deductive system is determined by two simple ideal terms.

\begin{theorem}
Let $\mathbf P=(P,\le,*)$ be a relatively pseudocomplemented poset, $a,b,c\in P$, $D\subseteq P$ and $\Theta\in\Con\mathbf P$ and define $t_1:=1$ and $t_2(x,y_1,y_2):=\big(y_1*(y_2*x)\big)*x$. Then the following holds:
\begin{enumerate}[{\rm(i)}]
\item $t_1$ and $t_2$ are ideal terms,
\item if $D$ is closed with respect to $t_1$ and $t_2$ then $D$ is a deductive system of $\mathbf P$,
\item $[1]\Theta$ is closed with respect to $t_1$ and $t_2$.
\end{enumerate}
\end{theorem}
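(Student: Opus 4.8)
The plan is to verify the three items in turn, each reducing to a short computation built on the basic identities $1*a=a$ and $a*a=1$ from Lemma~\ref{lem1}(ii). For (i) I would check the ideal-term condition directly. The term $t_1=1$ is the constant $1$, so substituting $1$ for its ideal variables yields $1$ and it is an ideal term. For $t_2$, I substitute $1$ for both ideal variables $y_1,y_2$ and compute $t_2(a,1,1)=\big(1*(1*a)\big)*a$; applying $1*a=a$ twice and then $a*a=1$ gives $t_2(a,1,1)=1$ for every $a\in P$, so $t_2$ is an ideal term as well.

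For (ii), closedness with respect to $t_1$ forces $1\in D$, which is condition (i) of a deductive system. For condition (ii), suppose $u\in D$, $v\in P$ and $u*v\in D$; I want $v\in D$. The key step is to feed the right elements into $t_2$: set the free variable equal to $v$ and the two ideal variables equal to $u*v$ and $u$, i.e.\ evaluate $t_2(v,u*v,u)=\big((u*v)*(u*v)\big)*v$. Since $(u*v)*(u*v)=1$ by Lemma~\ref{lem1}(ii) and $1*v=v$, this collapses to $v$. As $u*v,u\in D$ and $D$ is closed with respect to $t_2$, we conclude $v\in D$. This substitution, which makes $t_2$ reduce to $v$ and thereby encodes modus ponens, is the main obstacle; once it is spotted the argument is immediate.

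For (iii), closedness with respect to $t_1$ is clear since $1\in[1]\Theta$. For $t_2$, let $a\in P$ and $b_1,b_2\in[1]\Theta$, so $b_1\mathrel\Theta1$ and $b_2\mathrel\Theta1$. Recalling that $\Theta\in\Con\mathbf P$ is compatible with $*$ by Definition~\ref{def1}, I would use reflexivity of $\Theta$, compatibility with $*$, the identity $1*a=a$, and transitivity to chain $b_2*a\mathrel\Theta1*a=a$, then $b_1*(b_2*a)\mathrel\Theta1*a=a$, and finally $\big(b_1*(b_2*a)\big)*a\mathrel\Theta a*a=1$. Hence $t_2(a,b_1,b_2)\in[1]\Theta$, so $[1]\Theta$ is closed with respect to $t_2$, completing the proof.
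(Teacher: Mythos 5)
Your proof is correct and follows essentially the same route as the paper: the same computation for (i), the identical substitution $t_2(v,u*v,u)=\big((u*v)*(u*v)\big)*v=v$ encoding modus ponens for (ii), and compatibility of $\Theta$ with $*$ yielding $t_2(a,b_1,b_2)\mathrel\Theta t_2(a,1,1)=1$ for (iii). The only cosmetic difference is that you spell out (iii) as a step-by-step chain where the paper substitutes $1$ for $b$ and $c$ in one stroke; both are the same argument.
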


\begin{proof}
\
\begin{enumerate}[(i)]
\item We have $t_1=1$ and $t_2(a,1,1)=\big(1*(1*a)\big)*a=(1*a)*a=a*a=1$.
\item We have $1=t_1\in D$ and if $a,a*b\in D$ then
\[
b=1*b=\big((a*b)*(a*b)\big)*b=t_2(b,a*b,a)\in D.
\]
\item We have $t_1=1\in[1]\Theta$ and if $b,c\in[1]\Theta$ then
\[
t_1(a,b,c)=\big(b*(c*a)\big)*a\in[\big(1*(1*a)\big)*a]\Theta=[(1*a)*a]\Theta=[a*a]\Theta=[1]\Theta.
\]
\end{enumerate} 
\end{proof}

For a subset $A$ of a relatively pseudocomplemented poset $(P,\le,*)$ we define a binary relation $\Theta_A$ on $P$ as follows:
\[
(x,y)\in\Theta_A\text{ if there exist }a,b\in A\text{ with }L(x,a,b)=L(y,a,b).
\]

\begin{definition}
A subset $F$ of a relatively pseudocomplemented poset $\mathbf P=(P,\le,*)$ is called a {\em filter} of $\mathbf P$ if it is a filter of $(P,\le)$ satisfying $x*y\in F$ for all $x,y\in F$. A {\em filter} of $\mathbf P$ is called {\em strong} if it is a strong filter of $(P,\le)$.
\end{definition}

\begin{lemma}
Let $\mathbf P=(P,\le,*)$ be a relatively pseudocomplemented poset, $D$ a deductive system of $\mathbf P$ and $\Theta\in\Con\mathbf P$. Then $D$ is a filter of $\mathbf P$ and $[1]\Theta$ a strong filter of $\mathbf P$.
\end{lemma}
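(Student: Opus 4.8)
The plan is to treat the two assertions separately, since each reduces to results already at hand.

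First I would verify that the deductive system $D$ is a filter of $\mathbf P$. Nonemptiness is immediate from condition (i) of a deductive system, namely $1\in D$. For the upward-closure condition (ii) of Definition~\ref{def2}, suppose $x\in D$, $y\in P$ and $x\le y$. By Lemma~\ref{lem1}~(iii) this gives $x*y=1\in D$, so condition (ii) of a deductive system, applied to $x$ and $y$, yields $y\in D$. For closure under $*$, let $x,y\in D$; by Lemma~\ref{lem1}~(iv) we have $y\le x*y$, and since $y\in D$ and $D$ is upward closed, it follows that $x*y\in D$. Thus $D$ is a filter of $(P,\le)$ closed under $*$, i.e.\ a filter of $\mathbf P$.

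For the second assertion, put $F:=[1]\Theta$. As observed just after the definition of a deductive system, $F$ is a deductive system of $\mathbf P$, so by the first part $F$ is already a filter of $\mathbf P$. It then remains only to check condition (iii) for a strong filter of $(P,\le)$, that is, $L(x,y)\cap F\ne\emptyset$ for all $x,y\in F$. Here I would invoke Corollary~\ref{cor1}: by Definition~\ref{def1} every $\Theta\in\Con\mathbf P$ is in particular a congruence on $(P,\le)$, so Corollary~\ref{cor1}~(iv) applies and tells us that $F$ is a strong filter of $(P,\le)$. Combining this with the fact that $F$ is a filter of $\mathbf P$, the definition of a strong filter of $\mathbf P$ yields that $F$ is a strong filter of $\mathbf P$.

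I do not expect a genuine obstacle in this lemma; the work lies entirely in assembling the correct earlier results. The two points that repay a little care are, first, that the $*$-closure of $D$ comes for free from upward closure together with the inequality $y\le x*y$ of Lemma~\ref{lem1}~(iv), and second, that the strong-filter property of $[1]\Theta$ at the level of $(P,\le)$ is already supplied by Corollary~\ref{cor1}~(iv), once one notes that a congruence compatible with $*$ is \emph{a fortiori} an order congruence.
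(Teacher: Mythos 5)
Your proof is correct and follows essentially the same path as the paper: nonemptiness from $1\in D$, upward closure via Lemma~\ref{lem1}~(iii) and the deductive-system axiom, $*$-closure from $y\le x*y$ (Lemma~\ref{lem1}~(iv)), and Corollary~\ref{cor1}~(iv) for the strong-filter property of $[1]\Theta$. The only (harmless) divergence is in how you get $*$-closure of $[1]\Theta$: the paper computes it directly from compatibility of $\Theta$ with $*$, namely $a*b\in[1*1]\Theta=[1]\Theta$, whereas you route it through the earlier observation that $[1]\Theta$ is a deductive system together with your first part --- both arguments rest on the same compatibility fact, so this is a cosmetic difference.
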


\begin{proof}
Because of $1\in D$ we have $D\ne\emptyset$. If $a\in D$, $b\in P$ and $a\le b$ then $a*b=1\in D$ by (ii) of Lemma~\ref{lem1} and hence $b\in D$. If, finally, $a,b\in D$ then because of $b\le a*b$ (according to (iv) of Lemma~\ref{lem1}) we have $a*b\in D$ showing that $D$ is a filter of $\mathbf P$. According to Corollary~\ref{cor1} $[1]\Theta$ is a strong filter of $(P,\le)$. If $a,b\in[1]\Theta$ then $a*b\in[1*1]\Theta=[1]\Theta$ by (iii) of Lemma~\ref{lem1} showing that $[1]\Theta$ is a strong filter of $\mathbf P$.
\end{proof}

Though, according to Lemma~\ref{lem3}, $\Theta$ seems in general not to be determined by $[1]\Theta$, we can show that in case of a strong filter $F$, $\Theta_F$ is determined by $F$.

\begin{proposition}
Let $\mathbf P=(P,\le,*)$ be a relatively pseudocomplemented poset, $a,b\in P$ and $F$ a strong filter of $\mathbf P$. Then $(a,b)\in\Theta_F$ if and only if $a*b,b*a\in F$.
\end{proposition}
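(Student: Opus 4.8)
The plan is to prove the two implications separately, exploiting the symmetry of both conditions in $a$ and $b$ together with two facts from Lemma~\ref{lem1}: the absorption identity $L(a,a*b)=L(a,b)$ of part~(x), and the adjunction ``$x\le b*c$ if and only if $L(x,b)\le c$'' of part~(i). Note first that $(a,b)\in\Theta_F$ is symmetric in $a$ and $b$, since the defining equation $L(a,p,q)=L(b,p,q)$ (with $p,q\in F$) is unchanged under swapping $a$ and $b$, and the right-hand condition $a*b,b*a\in F$ is likewise symmetric. Hence in each direction it suffices to establish one of the two membership statements and obtain the other by interchanging $a$ and $b$.

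For the implication from right to left, I would assume $a*b,b*a\in F$ and simply take these two elements as the witnesses required by the definition of $\Theta_F$, that is, put $p:=a*b$ and $q:=b*a$, both lying in $F$. The remaining task is to verify $L(a,a*b,b*a)=L(b,a*b,b*a)$. Writing the left-hand side as $L(a)\cap L(a*b)\cap L(b*a)$ and applying Lemma~\ref{lem1}(x) first in the form $L(a)\cap L(a*b)=L(a)\cap L(b)$ and then in the form $L(b)\cap L(b*a)=L(b)\cap L(a)$, I expect both triple intersections to collapse to $L(a)\cap L(b)=L(a,b)$. This is a short computation applying the identity twice on each side, and it yields $(a,b)\in\Theta_F$.

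For the implication from left to right, assume $(a,b)\in\Theta_F$, so there are $p,q\in F$ with $L(a,p,q)=L(b,p,q)$. Here the strong-filter hypothesis enters decisively: by condition~(iii) of Definition~\ref{def2} there is some $r\in L(p,q)\cap F$. Since $r\le p$ and $r\le q$ we have $L(r)\subseteq L(p)\cap L(q)$, whence $L(r,a)\subseteq L(a,p,q)=L(b,p,q)\subseteq L(b)$; that is, $L(r,a)\le b$, and Lemma~\ref{lem1}(i) then gives $r\le a*b$. As $r\in F$ and $F$ is upward closed, $a*b\in F$. Replacing $a$ by $b$ throughout, using $L(r,b)\subseteq L(b,p,q)=L(a,p,q)\subseteq L(a)$, yields $b*a\in F$ in exactly the same way.

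The main obstacle is the forward direction, and specifically the step that replaces the two filter elements $p,q$ by a single element $r\in F$ lying below both. This is precisely what the strong-filter property supplies: without condition~(iii) one would be stuck with the pair $(p,q)$ and could not conclude $r\le a*b$ for a single $r\in F$. The backward direction, by contrast, is purely computational and rests only on iterated use of the absorption law $L(a,a*b)=L(a,b)$, the only care needed there being the bookkeeping of the triple intersections.
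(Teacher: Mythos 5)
Your proof is correct and follows essentially the same route as the paper: the backward direction uses $a*b$ and $b*a$ as the witnesses in $\Theta_F$ and collapses both triple intersections to $L(a,b)$ via Lemma~\ref{lem1}(x), and the forward direction uses the strong-filter condition to produce a single $r\in L(p,q)\cap F$ with $r\le a*b$ and $r\le b*a$ (the paper merely phrases this in the other order, proving $L(c,d)\le a*b$ before invoking condition~(iii)), followed by upward closure of $F$.
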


\begin{proof}
Assume $(a,b)\in\Theta_F$. Then there exist $c,d\in F$ with $L(a,c,d)=L(b,c,d)$. Because of $L(a,c,d)\le b$ and $L(b,c,d)\le a$ we have $L(c,d)\le a*b$ and $L(c,d)\le b*a$, respectively. Since $F$ is compatible we conclude $L(c,d)\cap F\ne\emptyset$ and hence $a*b,b*a\in F$. If, conversely, $a*b,b*a\in F$ then because of (x) of Lemma~\ref{lem1} we have
\begin{align*}
L(a,a*b,b*a) & =L(a,b,b*a)=L(a,b,a)=L(b,a,b)=L(b,a,a*b)= \\
             & =L(b,b*a,a*b)=L(b,a*b,b*a)
\end{align*}
and hence $(a,b)\in\Theta_F$.
\end{proof}

\section{Boolean posets}

Now we turn our attention to another sort of posets generalizing Boolean algebras, namely the so-called Boolean posets.

Recall that a {\em poset} $(P,\le)$ is called {\em distributive} if it satisfies one of the following equivalent identities:
\begin{align*}
 L\big(U(x,y),z\big) & \approx LU\big(L(x,z),L(y,z)\big), \\	
UL\big(U(x,y),z\big) & \approx U\big(L(x,z),L(y,z)\big), \\	
 U\big(L(x,y),z\big) & \approx UL\big(U(x,z),U(y,z)\big), \\
LU\big(L(x,y),z\big) & \approx L\big(U(x,z),U(y,z)\big).
\end{align*}
A unary operation $'$ on a bounded poset $\mathbf P=(P,\le,0,1)$ is called a {\em complementation} on $\mathbf P$ if $\mathbf P$ satisfies the identities
\[
U(x,x')\approx1\text{ and }L(x,x')\approx0.
\]
In this case, $(P,\le,{}',0,1)$ is called a {\em complemented poset}. A distributive complemented poset is called a {\em Boolean poset}.

It is well-known that a distributive poset can have at most one complementation.

\begin{definition}
Let $\mathbf P=(P,\le,{}',0,1)$ be a Boolean poset and $\Theta$ a binary relation on $P$. Then we will call $\Theta$ a {\em congruence} on $\mathbf P$ if $\Theta$ is a congruence on $(P,\le)$ being compatible with $'$. Let $\Con\mathbf P$ denote the set of all congruences on $\mathbf P$.
\end{definition}

\begin{example}\label{ex5}
The Boolean poset $\mathbf P=(P,\le,{}',0,1)$ visualizeded in Fig.~4

\vspace*{-4mm}

\begin{center}
\setlength{\unitlength}{7mm}
\begin{picture}(8,8)
\put(4,1){\circle*{.3}}
\put(1,3){\circle*{.3}}
\put(3,3){\circle*{.3}}
\put(5,3){\circle*{.3}}
\put(7,3){\circle*{.3}}
\put(1,5){\circle*{.3}}
\put(3,5){\circle*{.3}}
\put(5,5){\circle*{.3}}
\put(7,5){\circle*{.3}}
\put(4,7){\circle*{.3}}
\put(4,1){\line(-3,2)3}
\put(4,1){\line(-1,2)1}
\put(4,1){\line(1,2)1}
\put(4,1){\line(3,2)3}
\put(4,7){\line(-3,-2)3}
\put(4,7){\line(-1,-2)1}
\put(4,7){\line(1,-2)1}
\put(4,7){\line(3,-2)3}
\put(1,3){\line(0,1)2}
\put(1,3){\line(1,1)2}
\put(1,3){\line(2,1)4}
\put(3,3){\line(-1,1)2}
\put(3,3){\line(0,1)2}
\put(3,3){\line(2,1)4}
\put(5,3){\line(-2,1)4}
\put(5,3){\line(0,1)2}
\put(5,3){\line(1,1)2}
\put(7,3){\line(-2,1)4}
\put(7,3){\line(-1,1)2}
\put(7,3){\line(0,1)2}
\put(3.85,.3){$0$}
\put(.35,2.85){$a$}
\put(2.35,2.85){$b$}
\put(5.4,2.85){$c$}
\put(7.4,2.85){$d$}
\put(.35,4.85){$d'$}
\put(2.35,4.85){$c'$}
\put(5.4,4.85){$b'$}
\put(7.4,4.85){$a'$}
\put(3.85,7.4){$1$}
\put(.7,-.75){{\rm Figure~4. Boolean poset $\mathbf P$}}
\end{picture}
\end{center}

\vspace*{4mm}

has exactly four non-trivial congruences, namely
\begin{align*}
\Theta_1 & :=[0,a']^2\cup[a,1]^2, \\
\Theta_2 & :=[0,b']^2\cup[b,1]^2, \\
\Theta_3 & :=[0,c']^2\cup[c,1]^2, \\
\Theta_4 & :=[0,d']^2\cup[d,1]^2.
\end{align*}
The Hasse diagram of $(\Con\mathbf P,\subseteq)$ is depicted in Fig.~5.

\vspace*{-4mm}

\begin{center}
\setlength{\unitlength}{7mm}
\begin{picture}(8,6)
\put(4,1){\circle*{.3}}
\put(1,3){\circle*{.3}}
\put(3,3){\circle*{.3}}
\put(5,3){\circle*{.3}}
\put(7,3){\circle*{.3}}
\put(4,5){\circle*{.3}}
\put(4,1){\line(-3,2)3}
\put(4,1){\line(-1,2)1}
\put(4,1){\line(1,2)1}
\put(4,1){\line(3,2)3}
\put(4,5){\line(-3,-2)3}
\put(4,5){\line(-1,-2)1}
\put(4,5){\line(1,-2)1}
\put(4,5){\line(3,-2)3}
\put(3.75,.3){$\Delta$}
\put(.05,2.8){$\Theta_1$}
\put(2.05,2.8){$\Theta_2$}
\put(5.3,2.8){$\Theta_3$}
\put(7.3,2.8){$\Theta_4$}
\put(3.75,5.4){$\nabla$}
\put(-.35,-.75){{\rm Figure~5. Congruence lattice of $\mathbf P$}}
\end{picture}
\end{center}

\vspace*{4mm}

Observe that $(\Con\mathbf P,\subseteq)$ is a lattice, but in this lattice the meet-operation does not coincide with set-theoretical intersection, namely
\[
\Theta_1\wedge\Theta_2=\Delta\ne\{0,c,d\}^2\cup[a,b']^2\cup[b,a']^2\cup\{c',d',1\}^2=\Theta_1\cap\Theta_2.
\]
\end{example}

\begin{example}\label{ex4}
The Boolean poset $\mathbf P=(P,\le,{}',0,1)$ visualized in Fig.~6
	
\vspace*{-4mm}
	
\begin{center}
\setlength{\unitlength}{7mm}
\begin{picture}(8,10)
\put(4,1){\circle*{.3}}
\put(1,3){\circle*{.3}}
\put(3,3){\circle*{.3}}
\put(5,3){\circle*{.3}}
\put(7,3){\circle*{.3}}
\put(1,7){\circle*{.3}}
\put(3,7){\circle*{.3}}
\put(5,7){\circle*{.3}}
\put(7,7){\circle*{.3}}
\put(4,9){\circle*{.3}}
\put(1,5){\circle*{.3}}
\put(7,5){\circle*{.3}}
\put(4,1){\line(-3,2)3}
\put(4,1){\line(-1,2)1}
\put(4,1){\line(1,2)1}
\put(4,1){\line(3,2)3}
\put(4,9){\line(-3,-2)3}
\put(4,9){\line(-1,-2)1}
\put(4,9){\line(1,-2)1}
\put(4,9){\line(3,-2)3}
\put(1,3){\line(0,1)4}
\put(1,3){\line(1,1)4}
\put(3,3){\line(-1,1)2}
\put(3,3){\line(1,1)4}
\put(5,3){\line(-1,1)4}
\put(5,3){\line(1,1)2}
\put(7,3){\line(-1,1)4}
\put(7,3){\line(0,1)4}
\put(1,5){\line(1,1)2}
\put(7,5){\line(-1,1)2}
\put(3.85,.3){$0$}
\put(.35,2.85){$a$}
\put(2.35,2.85){$b$}
\put(5.4,2.85){$c$}
\put(7.4,2.85){$d$}
\put(.35,4.85){$e$}
\put(7.4,4.85){$e'$}
\put(.35,6.85){$d'$}
\put(2.35,6.85){$c'$}
\put(5.4,6.85){$b'$}
\put(7.4,6.85){$a'$}
\put(3.85,9.4){$1$}
\put(.45,-.75){{\rm Figure~6. Boolean poset $\mathbf P$}}
\end{picture}
\end{center}
	
\vspace*{4mm}
	
has exactly eight non-trivial congruences, namely
\begin{align*}
\Theta_1 & :=[0,a]^2\cup[b,e]^2\cup[c,d']^2\cup[d,c']^2\cup[e',b']^2\cup[a',1]^2, \\
\Theta_2 & :=[0,d]^2\cup[c,e']^2\cup[b,a']^2\cup[a,b']^2\cup[e,c']^2\cup[d',1]^2, \\
\Theta_3 & :=[0,e]^2\cup[c,d']^2\cup[d,c']^2\cup[e',1]^2, \\
\Theta_4 & :=[0,e']^2\cup[b,a']^2\cup[a,b']^2\cup[e,1]^2, \\
\Theta_5 & :=[0,c']^2\cup[c,1]^2, \\
\Theta_6 & :=[0,d']^2\cup[d,1]^2, \\
\Theta_7 & :=[0,a']^2\cup[a,1]^2, \\
\Theta_6 & :=[0,b']^2\cup[b,1]^2.
\end{align*}
The Hasse diagram of $(\Con\mathbf P,\subseteq)$ is depicted in Fig.~7.
	
\vspace*{-4mm}
	
\begin{center}
\setlength{\unitlength}{7mm}
\begin{picture}(8,10)
\put(4,1){\circle*{.3}}
\put(3,3){\circle*{.3}}
\put(5,3){\circle*{.3}}
\put(3,5){\circle*{.3}}
\put(5,5){\circle*{.3}}
\put(1,7){\circle*{.3}}
\put(3,7){\circle*{.3}}
\put(5,7){\circle*{.3}}
\put(7,7){\circle*{.3}}
\put(4,9){\circle*{.3}}
\put(4,1){\line(-1,2)1}
\put(4,1){\line(1,2)1}
\put(3,3){\line(0,1)4}
\put(5,3){\line(0,1)4}
\put(3,5){\line(-1,1)2}
\put(5,5){\line(1,1)2}
\put(4,9){\line(-3,-2)3}
\put(4,9){\line(-1,-2)1}
\put(4,9){\line(1,-2)1}
\put(4,9){\line(3,-2)3}
\put(3.75,.3){$\Delta$}
\put(.05,6.8){$\Theta_5$}
\put(2.05,6.8){$\Theta_6$}
\put(5.3,6.8){$\Theta_7$}
\put(7.3,6.8){$\Theta_8$}
\put(2.05,4.8){$\Theta_3$}
\put(5.3,4.8){$\Theta_4$}
\put(2.05,2.8){$\Theta_1$}
\put(5.3,2.8){$\Theta_2$}
\put(3.75,9.4){$\nabla$}
\put(-.35,-.75){{\rm Figure~7. Congruence lattice of $\mathbf P$}}
\end{picture}
\end{center}
	
\vspace*{4mm}
	
\end{example}

One can see that in the Boolean poset from Example~\ref{ex5} the filters $[a',1]$, $[b',1]$, $[c',1]$ and $[d',1]$ are not congruence kernels and that the same is true for the filters $[b',1]$ and $[c',1]$ of the Boolean poset from Example~\ref{ex4}. The reason is as follows.

\begin{lemma}\label{lem2}
Let $\mathbf P=(P,\le,{}',0,1)$ be a Boolean poset and $a\in P$. Then the following holds:
\begin{enumerate}[{\rm(i)}]
\item If there exists some $b\in P$ such that for all $c\in\Max L(a,b)$ there exists some $d\in U(c)\setminus[a,1]$ with $b\vee d=1$ then $[a,1]$ is not a congruence kernel of $\mathbf P$.
\item If there exists some $b\in P$ such that for all $c\in\Min U(a',b)$ there exists some $d\in U(b)\setminus[a,1]$ with $c\vee d=1$ then $[a,1]$ is not a congruence kernel of $\mathbf P$.
\end{enumerate}
\end{lemma}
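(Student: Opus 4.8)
The plan is to argue by contradiction in both parts. Suppose $[a,1]$ were a congruence kernel, so that $[a,1]=[1]\Theta$ for some $\Theta\in\Con\mathbf P$. Since $a\in[a,1]$, this gives $(a,1)\in\Theta$ at once, and the whole strategy is to propagate this single pair through the compatibility conditions until the exceptional element $d$ furnished by the hypothesis is forced into $[1]\Theta=[a,1]$, contradicting $d\notin[a,1]$.

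For part (i) I would first feed the pairs $(a,1)$ and $(b,b)$ (the latter by reflexivity) into compatibility with $\Max L$. Since $L(1,b)=L(b)$ we have $\Max L(1,b)=b$, so the second coordinate is forced and we obtain some $c\in\Max L(a,b)$ with $(c,b)\in\Theta$. Now I invoke the hypothesis for this particular $c$: it yields $d\in U(c)\setminus[a,1]$ with $b\vee d=1$. Finally I feed $(c,b)$ and $(d,d)$ into compatibility with $\Min U$. Here $c\le d$ gives $\Min U(c,d)=d$, while $b\vee d=1$ gives $U(b,d)=\{1\}$ and hence $\Min U(b,d)=1$; both target sets are singletons, so we are forced to conclude $(d,1)\in\Theta$, i.e.\ $d\in[1]\Theta=[a,1]$, the desired contradiction.

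For part (ii) the only new ingredient is compatibility with the complementation. From $(a,1)\in\Theta$ and $1'=0$ (which holds in any Boolean poset, as $L(1,1')=L(1')=0$ forces $1'=0$) we get $(a',0)\in\Theta$. Feeding $(a',0)$ and $(b,b)$ into compatibility with $\Min U$, and using $U(0,b)=U(b)$ so that $\Min U(0,b)=b$, produces some $e\in\Min U(a',b)$ with $(e,b)\in\Theta$. The hypothesis applied to this $e$ gives $d\in U(b)\setminus[a,1]$ with $e\vee d=1$; feeding $(e,b)$ and $(d,d)$ into compatibility with $\Min U$, where now $b\le d$ forces $\Min U(b,d)=d$ and $e\vee d=1$ forces $\Min U(e,d)=1$, yields $(1,d)\in\Theta$, hence $d\in[a,1]$, again a contradiction.

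The computations themselves are routine once the right pairs are chosen, so there is no deep obstacle; the crux — and the only place where care is genuinely needed — is the bookkeeping of which $\Max L$ and $\Min U$ sets collapse to singletons, which is exactly what turns the existential quantifier in the definition of compatibility into a forced conclusion. The one subtle point I would flag is the role of the universal quantifier over $c$ (resp.\ over $e$) in the hypothesis: it is precisely what lets us apply the hypothesis to the \emph{particular} witness that compatibility hands us rather than to one we choose. It is also worth noting that part (i) never uses $'$ and so in fact holds in any bounded poset of the kind considered, whereas part (ii) genuinely needs the complementation in order to pass from $a$ to $a'$.
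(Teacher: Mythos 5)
Your proof is correct and follows essentially the same argument as the paper's: assume $[a,1]=[1]\Theta$, use compatibility with $\Max L$ (resp.\ with $'$ and $\Min U$) to produce a witness $c$ related to $b$, apply the hypothesis to that witness, and then use compatibility with $\Min U$ together with the singleton sets $\Min U(c,d)=\{d\}$ and $\Min U(b,d)=\{1\}$ to force $d\in[1]\Theta$, a contradiction. Your closing observation that part (i) needs no complementation is also made in the paper, immediately after its proof.
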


\begin{proof}
\
\begin{enumerate}[(i)]
\item Suppose there exists some $b\in P$ such that for all $c\in\Max L(a,b)$ there exists some $d\in U(c)\setminus[a,1]$ with $b\vee d=1$. Assume $[a,1]$ to be a congruence kernel of $\mathbf P$. Then there exists some $\Theta\in\Con\mathbf P$ satisfying $[1]\Theta=[a,1]$. Since $(a,1),(b,b)\in\Theta$ there exists some $c\in\Max L(a,b)$ with $(c,b)\in\Theta$. According to our assumption there exists some $d\in U(c)\setminus[a,1]$ with $b\vee d=1$. Since $(c,b),(d,d)\in\Theta$, $\Min U(c,d)=d$ and $\Min U(b,d)=1$ we have $(d,1)\in\Theta$, i.e.\ $d\in[1]\Theta=[a,1]$ contradicting $d\in U(c)\setminus[a,1]$. Hence $[a,1]$ is not a congruence kernel of $\mathbf P$.
\item Suppose there exists some $b\in P$ such that for all $c\in\Min U(a',b)$ there exists some $d\in U(b)\setminus[a,1]$ with $c\vee d=1$. Assume $[a,1]$ to be a congruence kernel of $\mathbf P$. Then there exists some $\Theta\in\Con\mathbf P$ satisfying $[1]\Theta=[a,1]$. Now $(a,1)\in\Theta$ implies $(0,a')\in\Theta$. Since $(0,a'),(b,b)\in\Theta$ there exists some $c\in\Min U(a',b)$ with $(b,c)\in\Theta$. According to our assumption there exists some $d\in U(b)\setminus[a,1]$ with $c\vee d=1$. Since $(b,c),(d,d)\in\Theta$, $\Min U(b,d)=d$ and $\Min U(c,d)=1$ we have $(d,1)\in\Theta$, i.e.\ $d\in[1]\Theta=[a,1]$ contradicting $d\in U(b)\setminus[a,1]$. Hence $[a,1]$ is not a congruence kernel of $\mathbf P$.
\end{enumerate}
\end{proof}

Observe that (i) of Lemma~\ref{lem2} holds also for posets $(P,\le,1)$ with top element $1$.

We can now apply Lemma~\ref{lem2} to the Boolean posets from Examples~\ref{ex5} and \ref{ex4}. Consider the filter $[a',1]$ of the Boolean poset from Example~\ref{ex5}. Then $\Max L(a',b')=\{c,d\}$. Now $d'\in U(c)\setminus[a',1]$, $b'\vee d'=1$, $c'\in U(d)\setminus[a',1]$ and $b'\vee c'=1$. According to (i) of Lemma~\ref{lem2}, $[a',1]$ is not a congruence kernel of $\mathbf P$. Next consider the filter $[b',1]$ of the Boolean poset from Example~\ref{ex4}. Then $\Min U(b,c)=\{a',d'\}$. Now $d',e'\in U(c)\setminus[b',1]$, $a'\vee d'=1$ and $d'\vee e'=1$. According to (ii) of Lemma~\ref{lem2}, $[b',1]$ is not a congruence kernel of $\mathbf P$.

It is well-known that in Boolean algebras there exists a so-called Pixley term, i.e.\ a ternary term $t(x,y,z)$ satisfying the identities $t(x,z,z)\approx x$, $t(x,y,x)\approx x$ and $t(x,x,z)\approx z$. Of course, every Pixley term is a Malcev term. Surprisingly, in Boolean posets $\mathbf P=(P,\le,{}',0,1)$ satisfying the ACC and the DCC a ternary operator $T(x,y,z)$ can be defined satisfying these identities in contrast to the fact that the values of $T$ on triples different from those mentioned above may be proper subsets of $P$. It was shown in \cite{CLa} that the ternary operator $T(x,y,z)$ on $P$ defined by
\[
T(x,y,z):=\Min U\big(\Max L(x,z),\Max L(x,y',z'),\Max L(x',y',z)\big)
\]
for all $x,y,z\in P$ is a Pixley operator. Since the operator $T(x,y,z)$ is composed by means of the binary operators $\Max L$ and $\Min U$ as well as the unary operation $'$, a binary relation compatible with $\Max L$, $\Min U$ and $'$ is also compatible with $T(x,y,z)$.

Applying the above Pixley operator we can prove the following result.

The proof of the next theorem follows the same lines as that of Theorem~\ref{th3} and is therefore omitted.

\begin{theorem}
If $\mathbf P=(P,\le,{}',0,1)$ is a Boolean poset satisfying the {\rm ACC} and the {\rm DCC}, $T(x,y,z)$ the above defined Pixley operator on $P$ and $\Theta$ a reflexive binary relation on $P$ being compatible with the binary operators $\Max L$ and $\Min U$ and the unary operation $'$ then $\Theta\in\Con\mathbf P$.
\end{theorem}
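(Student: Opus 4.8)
The plan is to exploit exactly the mechanism of the proof of Theorem~\ref{th3}, with the Malcev operator built from $\Max L$ and $*$ replaced by the Pixley operator built from $\Max L$, $\Min U$ and $'$. Since $\Theta$ is already assumed to be reflexive and compatible with $\Max L$, $\Min U$ and $'$, the only thing missing for $\Theta$ to be a congruence on $\mathbf P$ is symmetry and transitivity. Once these are verified, $\Theta$ is an equivalence relation compatible with $\Max L$ and $\Min U$, hence a congruence on $(P,\le)$, and being moreover compatible with $'$ it is a congruence on $\mathbf P$ in the sense of the relevant definition.

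The key preliminary observation, already recorded just before the statement, is that because $T(x,y,z)$ is a composition of $\Max L$, $\Min U$ and $'$, every binary relation compatible with these three operators is automatically compatible with $T$; thus $\Theta$ is compatible with $T$. I would then use only the two Malcev identities $T(x,x,z)\approx z$ and $T(x,z,z)\approx x$ contained in the Pixley identities, the third one $T(x,y,x)\approx x$ being irrelevant here. For symmetry, suppose $(a,b)\in\Theta$; feeding the triples $(a,a),(a,b),(b,b)\in\Theta$ (the outer two by reflexivity) into compatibility with $T$ produces $\Theta$-related elements of $T(a,a,b)=b$ and $T(a,b,b)=a$, i.e.\ $(b,a)\in\Theta$. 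For transitivity, suppose $(a,b),(b,c)\in\Theta$; feeding $(a,b),(b,b),(b,c)$ into compatibility with $T$ produces $\Theta$-related elements of $T(a,b,b)=a$ and $T(b,b,c)=c$, i.e.\ $(a,c)\in\Theta$.

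There is essentially no serious obstacle once the Pixley operator is in hand; the argument is a routine adaptation of Theorem~\ref{th3}. The one point requiring care is the status of the identities: the quantities $T(x,x,z)$ and $T(x,z,z)$ must be genuine singletons, equal to $z$ and $x$ respectively, and not merely sets containing them, so that the elements delivered by compatibility are forced to be the intended ones. This is guaranteed by the fact, established in \cite{CLa}, that the displayed $T$ really is a Pixley operator; the roles of the ACC and the DCC are precisely to keep $\Max L$ and $\Min U$ nonempty and hence $T$ well defined. With that in place the symmetry and transitivity computations above go through verbatim, which is why the authors note that the proof parallels that of Theorem~\ref{th3} and omit the details.
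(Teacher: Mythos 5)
Your proposal is correct and matches the paper's intended argument exactly: the paper omits the proof, stating only that it ``follows the same lines as that of Theorem~3,'' and your adaptation --- using compatibility with the composed operator $T$ plus the two Malcev identities contained in the Pixley identities to get symmetry (via the triples $(a,a),(a,b),(b,b)$) and transitivity (via $(a,b),(b,b),(b,c)$) --- is precisely that argument. Your added remark that the values $T(x,x,z)$ and $T(x,z,z)$ must be genuine singletons is also the right point of care, and it is indeed guaranteed by the Pixley-operator property established in the cited reference.
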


\begin{remark}
Although Boolean algebras are congruence permutable, congruence regular and congruence uniform, the same does not hold for Boolean posets. Consider the Boolean poset $\mathbf P$ from Example~\ref{ex4}. Although Boolean posets possess a Pixley operator, $\mathbf P$ is not congruence permutable since $(0,a)\in\Theta_1$ and $(a,b')\in\Theta_2$ and hence $(0,b')\in\Theta_1\circ\Theta_2$, but there does not exist some $x\in P$ satisfying $(0,x)\in\Theta_2$ and $(x,b')\in\Theta_1$ and therefore $(0,b')\notin\Theta_2\circ\Theta_1$. Further, $\mathbf P$ is not congruence regular because e.g.\ the congruences $\Theta_1$ and $\Theta_3$ have the class $[d,c']$ in common, and the congruences $\Theta_2$ and $\Theta_4$ have the class $[b,a']$ in common. Moreover, $\mathbf P$ is not congruence uniform since the congruences $\Theta_3$ and $\Theta_4$ have classes of different cardinalities. Finally, not every principal filter of $\mathbf P$ is a kernel of some congruence on $\mathbf P$ as shown after the proof of Lemma~\ref{lem2}.
\end{remark}	

Now we show connections between a congruence $\Theta$ on a Boolean poset and its kernel. As mentioned in Theorem~\ref{th1}, we can restrict ourselves to those pairs in $\Theta$ which consist of comparable elements.

\begin{theorem}\label{th2}
Let $\mathbf P=(P,\le,{}',0,1)$ be a Boolean poset, $a,b\in P$ and $\Theta\in\Con\mathbf P$. Then the following holds:
\begin{enumerate}[{\rm(i)}]
\item If $(a,b)\in\Theta$ then $\Min U(a,b')\cap[1]\Theta\ne\emptyset$,
\item if $a\le b$ and $LU(a,b')\cap[1]\Theta\ne\emptyset$ then $(a,b)\in\Theta$,
\item if $a\le b$ and $a\vee b'$ is defined then $(a,b)\in\Theta$ if and only if $a\vee b'\in[1]\Theta$.
\end{enumerate}
\end{theorem}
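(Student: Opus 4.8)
The plan is to deduce (iii) from (i) and (ii) and to concentrate the actual work on the first two statements. If $a\vee b'$ exists then $\Min U(a,b')=\{a\vee b'\}$ and $LU(a,b')=L(a\vee b')$, so both $\Min U(a,b')\cap[1]\Theta\ne\emptyset$ and $LU(a,b')\cap[1]\Theta\ne\emptyset$ collapse to the single condition $a\vee b'\in[1]\Theta$. Hence (iii) is nothing but (i) and (ii) read in this special case, and I would dispose of it in two lines once (i) and (ii) are available.

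For (i) I would not even invoke compatibility with $'$. Starting from $(a,b)\in\Theta$ and the reflexive pair $(b',b')\in\Theta$, compatibility with $\Min U$ produces some $e\in\Min U(a,b')$ and some $f\in\Min U(b,b')$ with $(e,f)\in\Theta$. Since $\mathbf P$ is complemented we have $U(b,b')=\{1\}$, whence $\Min U(b,b')=\{1\}$ and therefore $f=1$. Thus $e\in\Min U(a,b')\cap[1]\Theta$, which is exactly the assertion.

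For (ii) I would fix some $x\in LU(a,b')\cap[1]\Theta$. The one genuinely structural ingredient is a distributivity computation yielding $LU(a,b')\cap L(b)=L(a)$: I expect this from the first of the four distributive identities, with its three arguments instantiated as $a,b',b$, using $a\le b$ (so $L(a,b)=L(a)$) and complementation (so $L(b,b')=\{0\}$). Because $LU(a,b')=L(U(a,b'))$ is a lower cone, $x\in LU(a,b')$ forces $L(x)\subseteq LU(a,b')$, so that $L(x,b)\subseteq LU(a,b')\cap L(b)=L(a)$. Now applying compatibility with $\Max L$ to $(x,1)\in\Theta$ and $(b,b)\in\Theta$ gives some $p\in\Max L(x,b)$ and some $q\in\Max L(1,b)$ with $(p,q)\in\Theta$; since $L(1,b)=L(b)$ forces $\Max L(1,b)=\{b\}$, we get $q=b$, i.e.\ $(p,b)\in\Theta$. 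Finally $p\in\Max L(x,b)\subseteq L(x,b)\subseteq L(a)$ gives $p\le a\le b$, and as $[b]\Theta$ is convex by Theorem~\ref{th1}(ii) and contains both $p$ and $b$, it contains $a$; hence $(a,b)\in\Theta$.

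The main obstacle is twofold: establishing the identity $LU(a,b')\cap L(b)=L(a)$ from distributivity, and — more subtly — recognising that one should not try to pin down a minimal upper bound of $a,b'$. A tempting but flawed route fixes $u\in\Min U(a,b')$ and tries to show $\Max L(u,b)=\{a\}$ so as to force the congruent partner of $b$ to be exactly $a$; in a non-lattice poset a minimal upper bound need not be the join, and this identity can genuinely fail. Passing instead through an arbitrary $x$ lying below all of $U(a,b')$ and closing with the convexity of congruence classes circumvents this difficulty, and as a bonus the argument uses neither the ACC nor the DCC.
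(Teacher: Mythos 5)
Your proof is correct and follows essentially the same route as the paper's: (i) via the reflexive pair $(b',b')$ and $\Min U(b,b')=1$; (ii) via a witness $x\in LU(a,b')\cap[1]\Theta$, compatibility with $\Max L$ applied to $(x,1)$ and $(b,b)$, the distributivity computation $L\big(U(a,b'),b\big)=LU\big(L(a,b),L(b',b)\big)=L(a)$, and convexity of congruence classes; and (iii) as the specialization of (i) and (ii) when $a\vee b'$ exists. The only cosmetic difference is that the paper justifies the equivalence $LU(a,b')\cap[1]\Theta\ne\emptyset\Leftrightarrow a\vee b'\in[1]\Theta$ explicitly by convexity of $[1]\Theta$, whereas your derivation of (iii) only needs (and uses) the trivial direction of that equivalence.
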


\begin{proof}
\
\begin{enumerate}[(i)]
\item If $(a,b)\in\Theta$ then because of $(b',b')\in\Theta$ and $\Min U(b,b')=1$ there exists some $c\in\Min U(a,b')$ with $(c,1)\in\Theta$, i.e.\ $c\in\Min U(a,b')\cap[1]\Theta$ and hence $\Min U(a,b')\cap[1]\Theta\ne\emptyset$.
\item Since $LU(a,b')\cap[1]\Theta\ne\emptyset$ there exists some $d\in LU(a,b')\cap[1]\Theta$. Because $(d,1),(b,b)$ $\in\Theta$ and $\Max L(1,b)=b$ there exists some $e\in\Max L(d,b)$ with $(e,b)\in\Theta$. Now
\[
e\in L(d,b)\subseteq L\big(U(a,b'),b\big)=LU\big(L(a,b),L(b',b)\big)=LU\big(L(a),0\big)=L(a)
\]
and hence $e\le a$. Together we obtain $e\le a\le b$ which together with $(e,b)\in\Theta$ implies $(a,b)\in\Theta$ according to Theorem~\ref{th1} (ii).
\item Assume $a\le b$ and $a\vee b'$ is defined. Then $\Min U(a,b')=a\vee b'$ and $LU(a,b')=[0,a\vee b']$. Since $[1]\Theta$ is convex according to (ii) of Theorem~\ref{th1}, we have $LU(a,b')\cap[1]\Theta\ne\emptyset$ if and only if $a\vee b'\in[1]\Theta$. Now (iii) follows from (i) and (ii).
\end{enumerate}
\end{proof}

The following result shows that a certain kind of so-called weak congruence regularity holds also for Boolean posets.

\begin{corollary}\label{cor2}
Let $\mathbf P=(P,\le,{}',0,1)$ be a Boolean poset and $\Theta,\Phi\in\Con\mathbf P$. Further assume
\begin{enumerate}
\item[{\rm(2)}]
$LU(x,y')\cap[1]\Psi\ne\emptyset$ for all $\Psi\in\{\Theta,\Phi\}$ and all $(x,y)\in\Psi$ with $x\le y$.
\end{enumerate}
Moreover, suppose $[1]\Theta=[1]\Phi$. Then $\Theta=\Phi$.
\end{corollary}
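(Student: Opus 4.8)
The plan is to reduce the equality of the two congruences to their behaviour on comparable pairs, and then to dovetail assumption (2) with Theorem~\ref{th2}(ii), using the shared kernel as the bridge. By Theorem~\ref{th1}(iv), a congruence on a poset is determined by its couples of comparable elements, so to prove $\Theta=\Phi$ it suffices to show that for all $a,b\in P$ with $a\le b$ one has $(a,b)\in\Theta$ if and only if $(a,b)\in\Phi$. Since all hypotheses are symmetric in $\Theta$ and $\Phi$, I would only establish one implication, say $(a,b)\in\Theta\Rightarrow(a,b)\in\Phi$, and obtain the converse by interchanging their roles.

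For that implication I would fix $a\le b$ with $(a,b)\in\Theta$ and feed this comparable pair into assumption (2) applied to $\Psi=\Theta$, which yields $LU(a,b')\cap[1]\Theta\ne\emptyset$. The hypothesis $[1]\Theta=[1]\Phi$ then turns this into $LU(a,b')\cap[1]\Phi\ne\emptyset$. At this point the pair $(a,b)$ with $a\le b$ satisfies exactly the premise of Theorem~\ref{th2}(ii) for the congruence $\Phi$, and that result delivers $(a,b)\in\Phi$, as required.

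The whole argument is a direct composition of three ingredients: Theorem~\ref{th1}(iv) restricts attention to comparable pairs, assumption (2) converts membership in one congruence into a nonempty intersection of $LU(a,b')$ with that congruence's kernel, and Theorem~\ref{th2}(ii) converts a nonempty intersection with the (now shared) kernel back into membership in the other congruence. I expect no genuine computational obstacle here; the one point requiring care is bookkeeping, namely applying assumption (2) to the \emph{source} congruence while applying Theorem~\ref{th2}(ii) to the \emph{target} congruence, with $[1]\Theta=[1]\Phi$ used precisely to pass from one to the other. It is worth noting that Theorem~\ref{th2}(i) plays no role in this direction, since condition (2) already supplies the required intersection property outright; (i) is what makes condition (2) a natural and non-vacuous hypothesis, but it is not invoked in the deduction itself.
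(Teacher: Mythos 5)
Your proof is correct and is essentially identical to the paper's own argument: both reduce to comparable pairs via Theorem~\ref{th1}(iv), use assumption (2) on the source congruence, pass through the shared kernel, and apply Theorem~\ref{th2}(ii) to the target congruence. Your closing observation is also accurate — the paper cites Theorem~\ref{th2} wholesale, but only part (ii) is actually invoked, since (2) itself supplies the forward implication.
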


\begin{proof}
Let $a,b\in P$ with $a\le b$ and $\Psi\in\{\Theta,\Phi\}$. Then, by (2) and Theorem~\ref{th2}, $(a,b)\in\Psi$ if and only if $LU(a,b')\cap[1]\Psi\ne\emptyset$. Since $[1]\Theta=[1]\Phi$ we obtain $(a,b)\in\Theta$ if and only if $(a,b)\in\Phi$. Now $\Theta=\Phi$ follows from (iv) of Theorem~\ref{th1}.
\end{proof}

\begin{remark}
We can show that assumption {\rm(2)} is not strange. For example if $\mathbf P$ denotes the Boolean poset from Example~\ref{ex4} then $LU(x,y')\cap[1]\Theta\ne\emptyset$ for all $\Theta\in\Con\mathbf P$ and all $(x,y)\in\Theta$ with $x\le y$. This statement easily follows by observing
\[
\{(x,y)\in P^2\mid x\le y,x\vee y'\text{ is not defined}\}=(\{a,b\}\times\{c',d'\})\cup(\{c,d\}\times\{a',b'\}).
\]
and {\rm(iii)} of Theorem~\ref{th2}.
\end{remark}

{\bf Data availability statement} No datasets were generated or analyzed during the current study.

\section{Declarations}

{\bf Funding} Support of the first author by the Czech Science Foundation (GA\v CR), project 25-20013L, and by IGA, project P\v rF~2025~008, is gratefully acknowledged. Support of the second author by the Austrian Science Fund (FWF), project 10.55776/PIN5424624, is gratefully acknowledged.

{\bf Availability of data and materials} No datasets were generated or analyzed during the current study.

{\bf Competing interests} There are no competing interests of a financial or personal nature between the authors.

{\bf Authors' contributions} Both authors contributed equally to the manuscript.

{\bf Compliance with Ethical Standards} This article does not contain any studies with human participants or animals performed by any of the authors.

{\bf Conflict of interest} Both authors declare that they have no conflict of interest.








Authors' addresses:

Ivan Chajda \\
Palack\'y University Olomouc \\
Faculty of Science \\
Department of Algebra and Geometry \\
17.\ listopadu 12 \\
771 46 Olomouc \\
Czech Republic \\
ivan.chajda@upol.cz

Helmut L\"anger \\
TU Wien \\
Faculty of Mathematics and Geoinformation \\
Institute of Discrete Mathematics and Geometry \\
Wiedner Hauptstra\ss e 8-10 \\
1040 Vienna \\
Austria, and \\
Palack\'y University Olomouc \\
Faculty of Science \\
Department of Algebra and Geometry \\
17.\ listopadu 12 \\
771 46 Olomouc \\
Czech Republic \\
helmut.laenger@tuwien.ac.at
\end{document}